\newcommand{\STAB}[1]{\begin{tabular}{@{}c@{}}#1\end{tabular}}
\DeclareMathOperator*{\argmin}{argmin}
\definecolor{bluep}{RGB}{0,128,255}
\definecolor{amg}{RGB}{17,140,17}
\newcommand{\Rn}{\mathbb{R}^n}
\newcommand{\R}{\mathbb{R}}
\newcommand{\N}{\mathbb{N}}
\newcommand{\fix}{\mathrm{Fix}}
\newcommand{\prox}{\mathrm{prox}}
\newcommand{\res}[1]{{r_{#1}}}
\newcommand{\pred}[1]{{\mathrm{pred}_{#1}}}
\newcommand{\ared}[1]{{\mathrm{ared}_{#1}}}
\newcommand{\fidweight}{\nu}
\renewcommand{\AA}{\mathsf{AA}}
\newcommand{\citemain}{\cite}
\newcommand{\citeappx}{\cite}
\begin{document}

\title{Nonmonotone Globalization for Anderson Acceleration via Adaptive Regularization
}
\subtitle{}

\titlerunning{Nonmonotone Globalization for Anderson Acceleration}        

\author{Wenqing~Ouyang         \and
        Jiong~Tao	\and
        Andre~Milzarek \and
        Bailin~Deng
}

\authorrunning{Wenqing Ouyang et al.} 

\institute{W. Ouyang \at          
          School of Data Science, The Chinese University of Hong Kong, Shenzhen, China \\
          Shenzhen Research Institute of Big Data, Shenzhen, China\\
          Shenzhen Institute of Artificial Intelligence and Robotics for Society, Shenzhen, China         
           \and
           J. Tao \at
           Department of Computer Science, University of Bath, Bath, UK
           \and
           A. Milzarek \at
           School of Data Science, The Chinese University of Hong Kong, Shenzhen, China\\
           Shenzhen Research Institute of Big Data, Shenzhen, China
           \and
           B. Deng \at
           School of Computer Science and Informatics, Cardiff University, Cardiff, UK
           \and
           Corresponding author: Bailin Deng (\url{DengB3@cardiff.ac.uk})
}

\date{Received: date / Accepted: date}

\maketitle

\begin{abstract}
  Anderson acceleration ($\AA$) is a popular method for accelerating fixed-point iterations, but may suffer from instability and stagnation.
  We propose a globalization method for $\AA$ to improve stability and achieve unified global and local convergence.
  Unlike existing $\AA$ globalization approaches that rely on safeguarding operations and might hinder fast local convergence, we adopt a nonmonotone trust-region framework and introduce an adaptive quadratic regularization together with a tailored acceptance mechanism. 
  We prove global convergence and show that our algorithm attains the same local convergence as $\AA$ under appropriate assumptions. The effectiveness of our method is demonstrated in several numerical experiments.
\keywords{Anderson acceleration \and Global convergence \and Nonmonotone trust region \and Adaptive regularization}
\subclass{65B05 \and 65K10}
\end{abstract}

\section{Introduction}
In  applied mathematics, many problems can be reduced to solving a nonlinear fixed-point equation 
%
$g(x)=x$,
%
where $x\in\mathbb{R}^n$ and $g:\mathbb{R}^n\rightarrow\mathbb{R}^n$ is a given function. If $g$ is a contractive mapping, i.e.,
\begin{equation} \label{eq1-con} 
\|g(x)-g(y)\|\leq \kappa\|x-y\| \quad \forall~x,y\in\mathbb{R}^n,
\end{equation} 
where $\kappa<1$, then the iteration
\[x^{k+1}=g(x^k)\]
is ensured to converge to the fixed-point of $g$ by Banach's fixed-point theorem. Anderson acceleration ($\AA$)~\citemain{anderson1965iterative,Walker2011,Anderson2019} is a technique for speeding up the convergence of such an iterative process. Instead of using the update $x^{k+1}=g(x^k)$, it generates $x^{k+1}$ as an affine combination of the latest $m+1$ steps:
\begin{equation}\label{eq1-5}
		x^{k+1} = g(x^{k})+{\sum}_{i=1}^{m}\alpha_i^\ast (g(x^{k-i})-g(x^{k}))
\end{equation}
with the combination coefficients $\alpha^{\ast}=(\alpha^{\ast}_1, \ldots ,\alpha^{\ast}_m) \in\mathbb{R}^m$ being computed via an optimization problem
\begin{equation}\label{eq1-4}
\min_{\alpha}~\left\|f(x^{k})+{\sum}_{i=1}^{m}\alpha_i(f(x^{k-i})-f(x^{k}))\right\|^2,
\end{equation}
where $f(x^{k}) = g(x^{k}) - x^{k}$ denotes the \emph{residual function}.

$\AA$ was initially proposed to solve integral equations~\citemain{anderson1965iterative} and has gained popularity in recent years for accelerating fixed-point iterations~\citemain{Walker2011}. Examples include tensor decomposition~\citemain{Sterck2012}, linear system solving~\citemain{Pratapa2016}, and reinforcement learning~\citemain{geist2018anderson}, among many others~\citemain{matveev2018anderson,both2019anderson,an2017anderson,willert2015using,peng2018anderson,pavlov2018aa,pollock2019anderson,mai2019nonlinear,Zhang2019,mai2019anderson}.

On the theoretical side, it has been shown that $\AA$ is a quasi-Newton method for finding a root of the residual function~\citemain{Eyert1996,Fang2009,Rohwedder2011}. When applied to linear problems (i.e., if $g(x) = Ax - b$), $\AA$ is equivalent to the generalized minimal residual method (GMRES), \cite{potra2013characterization}. For nonlinear problems, $\AA$ is also closely related to the nonlinear generalized minimal residual method \cite{wang2021asymptotic}.
A local convergence analysis of $\AA$ for general nonlinear problems was first given in~\citemain{toth2015convergence,toth2017local} under the base assumptions that $g$ is Lipschitz continuously differentiable and the $\AA$ mixing coefficients $\alpha$, determined in \eqref{eq1-4}, stay in a compact set. However, the convergence rate provided in~\citemain{toth2015convergence,toth2017local} is not faster than the one of the original fixed-point iteration.
A more recent analysis in~\citemain{evans2020proof} shows that $\AA$ can indeed accelerate the local linear convergence of a fixed-point iteration up to an additional quadratic error term. This is further improved in \cite{pollock2021anderson} where q-linear convergence of $\AA$ is established. The convergence result in \cite{pollock2021anderson} requires sufficient linear independence of the columns of $[f(x^{k-1})-f(x^k),\dots,f(x^{k-m})-f(x^{k-m+1})]$ which is typically stronger than the previously mentioned boundedness assumption on the coefficients $\alpha$. By assuming the mixing coefficient $\alpha$ to be stationary during the iteration, an exact rate of $\AA$ is derived in \cite{wang2021asymptotic}. 

One issue of classical $\AA$ is that it can suffer from instability and stagnation~\citemain{potra2013characterization,scieur2016regularized}. Different techniques have been proposed to address this issue. For example, safeguarding checks were introduced in~\citemain{peng2018anderson,Zhang2019} to only accept an $\AA$ step if it meets certain criteria, but without a theoretical guarantee for convergence. Another direction is to introduce regularization to the problem~\eqref{eq1-4} for computing the combination coefficients. In~\citemain{fu2019anderson}, a quadratic regularization is used together with a safeguarding step to achieve global convergence of $\AA$ on Douglas-Rachford splitting, but there is no guarantee that the local convergence is faster than the original solver. In~\citemain{scieur2016regularized,Scieur2020}, a similar quadratic regularization is introduced to achieve local convergence, although no global convergence proof is provided. A more detailed discussion of related literature and specific techniques connected to our algorithmic design and development is deferred to Subsection~\ref{sec:ada-alg}.

As far as we are aware, none of the existing approaches and modified versions of $\AA$ guarantee both global convergence and accelerated local convergence. In this paper, we propose a novel $\AA$ globalization scheme that achieves these two goals simultaneously. Specifically, we apply a quadratic regularization with its weight adjusted automatically according to the effectiveness of the $\AA$ step. We adapt the nonmonotone trust-region framework in~\citemain{ulbrich2001nonmonotone} to update the weight and to determine the acceptance of the $\AA$ step. Our approach can not only achieve global convergence, but also attains the same local convergence rate established in~\citemain{evans2020proof} for $\AA$ without regularization. Furthermore, our local results also cover applications where the mapping $g$ is nonsmooth and differentiability is only required at a target fixed-point of $g$. To the best of our knowledge, this is the first globalization technique for $\AA$ that achieves the same local convergence rate as the original $\AA$ scheme. Numerical experiments on both smooth and nonsmooth problems verify the effectiveness and efficiency of our method. 
	
\textit{Notations}. Throughout this work, we restrict our discussion on the $n$-dimensional Euclidean space $\mathbb{R}^n$. For a vector $x$, $\|x\|$ denotes its Euclidean norm, and $\mathbb{B}_{\epsilon}(x) := \{y : \|y -x \| \leq \epsilon \}$ denotes the Euclidean ball centered at $x$ with radius $\epsilon$. For a matrix $A$, $\|A\|$ is the operator norm with respect to the Euclidean norm. We use $I$ to denote both the identity mapping (i.e., $I(x) = x$) and the identity matrix. For a function $h : \Rn \to \R^\ell$, the mapping $h^\prime : \Rn \to \R^{\ell \times n}$ represents its derivative. $h$ is called $L$-smooth if it is differentiable and $\|h^\prime(x)-h^\prime(y)\|\leq L\|x-y\|$ for all $x,y\in\Rn$. An operator $h:\Rn\to\Rn$ is called nonexpansive if for all $x,y\in\Rn$ we have
$\|h(x)-h(y)\| \leq \|x-y\|$.
We say that the operator $h$ is $\rho$-averaged for some $\rho\in(0,1)$ if there exists a nonexpansive operator $R : \Rn \to \Rn$ such that $h = (1-\rho)I+\rho R$. The set of fixed points of the mapping $h$ is defined via $\fix(h) := \{x: h(x)=x\}$. The interested reader is referred to~\citemain{BauCom11} for further details on operator theory.

The $\AA$ formulation in Eqs.~\eqref{eq1-5} and \eqref{eq1-4} assumes $k \geq m$. It can be adapted to account for the case  $k < m$ by using $\hat{m}$ coefficients instead where $\hat{m} = \min\{m, k\}$. Without loss of generality, we use $m$ to refer to the actual number of coefficients being used.  
\section{Algorithm and Convergence Analysis} \label{sec:alg} 

\subsection{Adaptive Regularization for $\AA$} \label{sec:ada-alg}
In the following, we set $f^k := f(x^k)$ and $g^k := g(x^k)$ to simplify notation.
We first note that the accelerated iterate computed via~\eqref{eq1-5} and \eqref{eq1-4} is invariant under permutations of the indices of $\{f^j\}$ and $\{g^j\}$. Concretely, let $\Pi_k := (k_0,k_1,\dots,k_m)$ be any permutation of the index sequence $(k,k-1,\dots,k-m)$. Then the point $x^{k+1}$ calculated in Eq.~\eqref{eq1-5}  also satisfies
\begin{equation} 
x^{k+1} = g^{k_0}+{\sum}_{i=1}^{m} \bar{\alpha}_i^k(g^{k_i}-g^{k_0}), 
\label{eq:PermutatedAA}
\end{equation}
with coefficients $\bar{\alpha}^k = (\bar{\alpha}_0^k, \ldots, \bar{\alpha}_m^k)$ computed via
\begin{equation}
\bar{\alpha}^k \in \argmin_{\alpha} \left\|f^{k_0}+{\sum}_{i=1}^{m}\alpha_i(f^{k_i}-f^{k_0})\right\|^2,
\label{eq:PermutatedAlphaProblem}
\end{equation}
which amounts to solving a linear system. In this paper, we use a particular class of permutations where 
\begin{equation}
k_0 =\max\left\{j \mid j \in \argmin\nolimits_{i\in\{k,k-1, \ldots, k-m\}}\|f^i\|\right\},
\label{eq:PermutationRule}
\end{equation}
i.e., $k_0$ is the largest index that attains the minimum residual norm among $\|f^{k-m}\|, \|f^{k-m+1}\|, \ldots, \|f^k\|$. 
As we will see later, this type of permutation allow us to apply certain nonmonotone globalization techniques and to ultimately establish local and global convergence of our approach. An ablation study on the potential effect of the permutation strategy is presented in Appendix~\ref{appx:AblationPermutation}.

One potential cause of instability of $\AA$ is the (near) linear dependence of the vectors $\{f^{k_i}-f^{k_0} : i = 1, \ldots, m\}$, which can result in (near) rank deficiency of the linear system matrix for the problem~\eqref{eq:PermutatedAlphaProblem}. To address this issue, we introduce a quadratic regularization to the  problem~\eqref{eq:PermutatedAlphaProblem} and compute the coefficients $\alpha^{k}$ via:
\begin{align}
\alpha^{k} = \argmin_{\alpha}~\|{\hat f}^k(\alpha)\|^2 + \lambda_k \|\alpha\|^2,
\label{eq:RegularizedOpt}
\end{align}
where $\lambda_k > 0$ is a regularization weight, and
\begin{equation}
{\hat f}^k(\alpha) := f^{k_0}+{\sum}_{i=1}^{m}\alpha_i(f^{k_i}-f^{k_0}).
\label{eq:Fhatk}
\end{equation}
The coefficients $\alpha^{k}$ are then used to compute a trial step in the same way as in Eq.~\eqref{eq:PermutatedAA}. In the following, we denote this trial step  as ${\hat g}^k(\alpha^k)$ where
\begin{equation}
	\hat{g}^k(\alpha) := g^{k_0}+{\sum}_{i=1}^{m}\alpha_i(g^{k_i}-g^{k_0}). 
	\label{eq:TrialStep}
\end{equation}
The trial step is accepted as the new iterate if it meets certain criteria (which we will develop in the following in detail).
Regularization such as the one in Eq.~\eqref{eq:RegularizedOpt} has been suggested in~\citemain{Anderson2019} and is applied in~\citemain{fu2019anderson,Scieur2020}. A major difference between our approach and the regularization in~\citemain{fu2019anderson,Scieur2020} is the choice of $\lambda_{k}$: in~\citemain{fu2019anderson} it is set in a heuristic manner, whereas in~\citemain{Scieur2020} it is either fixed or specified via grid search. We instead update $\lambda_k$ adaptively based on the effectiveness of the latest $\AA$ step. Specifically, we observe that a larger value of $\lambda_k$  can improve stability for the resulting linear system; it will also induce a stronger penalty for the magnitude of $\|\alpha^{k}\|$. In this case, the trial step $\hat{g}^k(\alpha^k)$ tends to be closer to $g^{k_0}$, 
which, according to Eq.~\eqref{eq:PermutationRule}, is the fixed-point iteration step with the smallest residual among the latest $m+1$ iterates.
On the other hand, a larger regularization weight may also hinder the fast convergence of $\AA$ if it is already effective in reducing the residual without regularization. Thus, $\lambda_k$ is dynamically adjusted according to the reduction of the residual in the current step.

Our adaptive regularization scheme is inspired by the similarity between the problem~\eqref{eq:RegularizedOpt} and the Levenberg-Marquardt (LM) algorithm~\citemain{Lev44,Mar63}, a popular approach for solving nonlinear least squares problems of the form $\min_x \|F(x)\|^2$, where $F$ is a vector-valued function. Each iteration of LM computes a variable update $d^k := x^{k+1} - x^k$ by solving a quadratic problem
\begin{equation*}
	\argmin_d ~\|F(x^k) + F'(x^k) d\|^2 + \bar{\lambda}_k \|d\|^2.
\end{equation*} 
Here, the first term is a local quadratic approximation of the target function $\|F(x)\|^2$ using the first-order Taylor expansion of $F$, while the second term is a regularization with a weight $\bar{\lambda}_k > 0$. LM can be considered as a regularized version of the classical Gauss-Newton (GN) method for nonlinear least squares optimization~\cite{madsen2004methods}. In GN, each iteration computes an initial step $d$ by minimizing the local quadratic approximation term only, i.e.,:
\begin{equation}
		\argmin_d ~\|F(x^k) + F'(x^k) d\|^2,
		\label{eq:GNProblem}
\end{equation}
which amounts to solving a linear system for $d$ with the positive semidefinite matrix $(F'(x^k))^T F'(x^k)$. Similar to $\AA$, the (near) linear dependence between the columns of $F'(x^k)$ can lead to (near) rank deficiency of the system matrix causing potential instability. To address this issue, LM introduces a quadratic regularization term for $d$, which adds a scaled identity matrix to the linear system matrix and prevents it from being singular.
Furthermore, LM measures the effectiveness of the computed update using a ratio of the resulting reduction of the target function and a predicted reduction based on the quadratic approximation. The measure is utilized to determine the acceptance of the update, to enforce monotonic decrease of the target function, and to update the regularization weight for the next iteration. Such an adaptive regularization is an instance of a trust-region method~\citemain{Conn2000}.

Taking a similar approach as LM, we define two functions $\ared{k}$ and $\pred{k}$ that measure the actual and predicted reduction of the residual resulting from the solution $\alpha^k$ to~\eqref{eq:RegularizedOpt}:
\begin{equation}
\ared{k} :=\res{k}-\|f(\hat{g}^k(\alpha^k))\|, \quad \pred{k} :=\res{k} -c\|\hat{f}^k(\alpha^k)\|,
\label{eq3-4}
\end{equation} 
where $c \in (0, 1)$ is a constant.
Here $\res{k}$ measures the residuals from the latest $m+1$ iterates via a convex combination:  
\begin{equation}
\res{k} := (1- m \gamma)\|f^{k_0}\| + {\sum}_{i=1}^m \gamma  \|f^{k_i}\|,
\label{eq:ReisdualCombination}
\end{equation}
with $\gamma \in (0, \frac{1}{m+1})$ such that a higher weight is assigned to the smallest residual $f^{k_0}$ among them.
Note that $\hat{g}^k(\alpha^k)$ is the trial step, and $f(\cdot)$ is the residual function. Thus $\ared{k}$ compares the latest residuals with the residual resulting from the trial step. This specific choice of $r_k$ is inspired by the local descent properties of $\AA$, see, e.g., \cite[Theorem 4.4]{evans2020proof}.
Moreover, note that $\hat{f}^k(\cdot)$ (see Eq.~\eqref{eq:Fhatk}) is a linear approximation of the residual function based on the latest residual values, and it is used in problem~\eqref{eq:RegularizedOpt} to derive the coefficients $\alpha^k$ for computing the trial step. Thus $\hat{f}^k(\alpha^k)$ is a predicted residual for the trial step based on the linear approximation, and $\pred{k}$ compares it with the latest residuals. The  constant $c$ guarantees that $\pred{k}$ has a positive value (as long as a solution to the problem has not been found; see Appendix~\ref{proof:PositivePredk} for a proof).
Similar to LM, we calculate the ratio
\begin{equation}
\rho_k = \frac{\ared{k}}{\pred{k}}
\label{eq:RatioRho}
\end{equation}
as a measure of effectiveness for the trial step $\hat{g}^k(\alpha^k)$ computed with Eqs.~\eqref{eq:RegularizedOpt} and \eqref{eq:TrialStep}. 
In particular, if $\rho_k \geq p_1$ with a threshold $p_1 \in (0,1)$, then from Eq.~\eqref{eq:RatioRho} and using the positivity of $\pred{k}$ we can bound the residual of $\hat{g}^k(\alpha^k)$ via
\begin{equation}
	\|f(\hat{g}^k(\alpha^k))\| \leq (1-p_1) r_k + p_1 c \|\hat{f}^k(\alpha^k)\|  < (1-p_1) r_k + p_1 \|f^{k_0}\|.
	\label{eq:NewCombination}
\end{equation}
Like $r_k$, the last expression $(1-p_1) r_k + p_1 \|f^{k_0}\|$ in Eq.~\eqref{eq:NewCombination} is also a convex combination of the latest $m+1$ residuals, but with a higher weight on the smallest residual $f^{k_0}$ than on $r_k$.
Hence, when $\rho_k \geq p_1$, we consider the decrease of the residual to be sufficient. In this case, we set $x^{k+1} = \hat{g}^k(\alpha^k)$ and say the iteration  is \emph{successful}. Otherwise, we discard the trial step and choose $x^{k+1} = g^{k_0} = g(x^{k_0})$, which corresponds to the fixed-point iteration step with the smallest residual among the latest $m+1$ iterates. 
Thus, by permuting the indices $(k,k-1,\ldots,k-m)$ according to $\Pi_k$, we can ensure to achieve the most progress in terms of reducing the residual when an $\AA$ trial step is rejected.

We also adjust the regularization weight $\lambda_k$ according to the ratio $\rho_k$. Specifically, we set
\begin{equation}
	\lambda_k= \mu_k \|f^{k_0}\|^2,
	\label{eq:LambdaK}
\end{equation}
where the factor $\mu_k > 0$ is automatically updated based on $\rho_{k}$ as follows:
\begin{itemize}
	\item If $\rho_{k} < p_1$, then we consider the decrease of the residual to be insufficient and we increase the factor in the next iteration via $\mu_{k+1} =  \eta_1\mu_k$ with a constant $\eta_1 > 1$. 
	\item If $\rho_{k} > p_2$ with a threshold $p_2 \in (p_1, 1)$, then we consider the decrease to be high enough and reduce the factor via $\mu_{k+1} =  \eta_2\mu_k$ with a constant $\eta_2 \in (0,1)$. This will relax the regularization so that the next trial step will tend to be closer to the original $\AA$ step.
	\item Otherwise, in the case $\rho_{k} \in [p_1, p_2]$, the factor remains the same in the next iteration.
\end{itemize} 
Here the choice of the parameters $p_1, p_2$ where $0 < p_1 < p_2 < 1$ follows the convention of basic trust-region methods~\cite{Conn2000}.

Our setting of $\lambda_k$ in Eq.~\eqref{eq:LambdaK} is inspired by~\citemain{fan2003modified} which relates the LM regularization weight to the residual norm. 
For our method, this setting ensures that the two target function terms in problem~\eqref{eq:RegularizedOpt} are of comparable scales, so that the adjustment of the factor $\mu_k$ is meaningful. This choice of $\lambda_k$ and the update rule of $\mu_k$ are quite standard in LM methods. However, the classical convergence analysis in~\citemain{fan2003modified} is not directly applicable here. In the LM method, the decrease of the residual can be predicted via its linearized model $\|F(x^k)+F'(x^k)d\|^2$. For $\AA$, the linearized residual $\hat f^k(\alpha^k)$ is not a model for the update $x^{k+1}=\hat g^k(\alpha^k)$ but for $\hat x^k(\alpha^k)$ instead. Since a linearized residual of $\hat g^k$ is not readily available, we use an upper bound for such a linearization of $\hat g^k(\alpha^k)$ which is exactly given by $c\|\hat f^k(\alpha^k)\|$. The whole method is summarized in Algorithm~\ref{algo1}.

\begin{algorithm}[t!]
	\caption{Anderson acceleration with adaptive regularization.}
	\label{algo1}
	\begin{algorithmic}[1]
		\REQUIRE  $x^0\in \mathbb{R}^{n}$, $\mu_0 > 0$, $k = 0$, $0<p_1<p_2<1,~0<\eta_2<1<\eta_1$, $0<c<1$.
		\FOR{$k = 0, 1, \ldots$}
		\STATE Compute $g^{k} = g (x^k)$, $f^{k} = g^k - x^k$.
		\IF{$\|f^k\|$ is smaller than a threshold $\epsilon_f$}
		\STATE Return $g^k$ as the result.
		\ENDIF
		\STATE Construct a permutation $\Pi_k = (k_0, k_1, \ldots, k_m)$ according to Eq.~\eqref{eq:PermutationRule}.
		\STATE Compute the regularization weight $\lambda_k$ via Eq.~\eqref{eq:LambdaK}.
		\STATE Solve the problem \eqref{eq:RegularizedOpt} to obtain the coefficients $\alpha^k$.
		\STATE Compute the ratio $\rho_k$ with Eq.~\eqref{eq:RatioRho}.
		\STATE Update the regularization factor $\mu_{k+1}$ via:
		$$\mu_{k+1}= \begin{cases} \eta_1\mu_k & \text{if}~\rho_k<p_1, \\ \mu_k & \text{if}~\rho_k\in[p_1,p_2], \\ \eta_2\mu_k & \text{if}~\rho_k>p_2. \end{cases}
		$$
		\IF{$\rho_k\geq p_1$} 
		\STATE {Set $x^{k+1}=\hat{g}^k(\alpha^k)$ via Eq.~\eqref{eq:TrialStep}.} 
		\ELSE 
		\STATE{Set $x^{k+1}=g^{k_0}$.} 
		\ENDIF	
		\ENDFOR
	\end{algorithmic}
\end{algorithm}

Unlike LM which enforces a monotonic decrease of the target function, our acceptance strategy allows the residual for $x^{k+1}$ to increase compared to the previous iterate $x^k$. Therefore, our scheme can be considered as a nonmonotone trust-region approach and follows the procedure investigated in~\citemain{ulbrich2001nonmonotone}. In the next subsections, we will see that this nonmonotone strategy allows us to establish unified global and local convergence results. In particular, besides global convergence guarantees, we can show transition to fast local convergence and an acceleration effect similar to the original $\AA$ scheme can be achieved.

The main computational overhead of our method lies in the optimization problem~\eqref{eq:RegularizedOpt}, which amounts to constructing and solving an $m \times m$ linear system
$( J^T J + {\lambda_k} I ) \alpha^k  =  -J^T f^{k_0}$
where $J = [ f^{k_1} - f^{k_0}, f^{k_2} - f^{k_0}, \ldots, f^{k_m} - f^{k_0} ] \in \mathbb{R}^{n \times m}$. 
A na\"{i}ve implementation that computes the matrix $J$ from scratch in each iteration will result in $O(m^2 n)$ time for setting up the linear system, whereas the system itself can be solved in $O(m^3)$ time. Since we typically have $m \ll n$, the linear system setup will become the dominant overhead.
To reduce the overhead, we note that each entry of $J^T J$ is a linear combination of inner products between $f^{k_0}, \ldots, f^{k_m}$. If we pre-compute and store these inner products, then it only requires additional $O(m^2)$ time to evaluate all entries. Moreover, the pre-computed inner products can be updated in $O(mn)$ time in each iteration, so we only need $O(mn)$ total time to evaluate $J^T J$. Similarly, we can evaluate $J^T f^{k_0}$ in $O(m)$ time. In this way, the linear system setup only requires $O(mn)$ time in each iteration. Moreover, as the parameter $m$ is often a small value independent of $n$ (and significantly smaller than $n$), the complexity $O(mn)$ is effectively linear with respect to $n$ and only incurs a small computational overhead.

\subsection{Global Convergence Analysis}
We now present our main assumptions on $g$ and $f$ that allow us to establish global convergence of Algorithm~\ref{algo1}. Our conditions are mainly based on a monotonicity property and on pointwise convergence of the iterated functions $g^{[k]} : \Rn \to \Rn$ defined as 
$g^{[k]}(x) := (\underbracket{\begin{minipage}[t][1.7ex][t]{9.7ex}\centering$\displaystyle g \circ \dots \circ g$\end{minipage}}_{k \; \text{times}})(x)$, for $k \in \N$.
\begin{assumption}
	\label{assump:GlobalConvergence}
	The functions $g$ and $f$ satisfy the following conditions:
	\begin{enumerate}[label=\textup{\textrm{(A.\arabic*)}},topsep=0pt,itemsep=0ex,partopsep=0ex,parsep=0ex,leftmargin=6ex]
	\item \label{Aone} $\|f(g(x))\|\leq \|f(x)\|$ for all $x \in \Rn$.
	\item \label{Atwo} $\lim\limits_{k\to \infty} \|f(g^{[k]}(x))\| = \nu$ for all $x \in \Rn$, where $\nu=\inf_{x\in\R^n}\|f(x)\|$.
	\end{enumerate}
	\end{assumption}
It is easy to see that Assumption~\ref{assump:GlobalConvergence} holds for any contractive function with $\nu = 0$. In particular, if $g$ satisfies \eqref{eq1-con}, we obtain 
\begin{align} \|f(g^{[k]}(x))\| & = \|g(g^{[k]}(x)) - g(g^{[k-1]}(x)) \| \label{eq2-conA} \\ \nonumber & \leq \kappa \|f(g^{[k-1]}(x))\| \leq \ldots \leq \kappa^{k} \|f(x)\| \to 0 \end{align}
as $k \to \infty$. In the following, we will verify that Assumption~\ref{assump:GlobalConvergence} also holds for $\rho$-averaged operators which define a broader class of mappings than contractions.
\begin{proposition} \label{prop:avop}
	Let $g:\Rn\to\Rn$ be a $\rho$-averaged operator with $\rho\in(0,1)$. 
	Then $g$ satisfies Assumption~\ref{assump:GlobalConvergence}. 
	\end{proposition}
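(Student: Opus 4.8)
The plan is to check the two conditions of Assumption~\ref{assump:GlobalConvergence} directly from the defining properties of averaged operators. Write $g = (1-\rho)I + \rho R$ with $R:\Rn\to\Rn$ nonexpansive. First observe that $g$ is itself nonexpansive, since $\|g(x)-g(y)\| \le (1-\rho)\|x-y\| + \rho\|R(x)-R(y)\| \le \|x-y\|$, and that $x\in\fix(g)$ if and only if $R(x)=x$, so $\fix(R)=\fix(g)\neq\emptyset$. Condition \ref{Aone} is then immediate: applying nonexpansiveness of $g$ to the pair $(g(x),x)$ gives $\|f(g(x))\| = \|g(g(x))-g(x)\| \le \|g(x)-x\| = \|f(x)\|$, so averagedness is in fact not needed for this part.

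For \ref{Atwo}, fix $x\in\Rn$ and set $x^k := g^{[k]}(x)$, so that $x^{k+1}=g(x^k)$ and $f(g^{[k]}(x)) = x^{k+1}-x^k = \rho\,(R(x^k)-x^k)$. The key step is a Fej\'er-monotone descent estimate: picking any $x^\ast\in\fix(g)=\fix(R)$ and applying the convexity identity $\|(1-\rho)a+\rho b\|^2 = (1-\rho)\|a\|^2+\rho\|b\|^2-\rho(1-\rho)\|a-b\|^2$ with $a=x^k-x^\ast$ and $b=R(x^k)-x^\ast$, together with the bound $\|R(x^k)-x^\ast\|\le\|x^k-x^\ast\|$ coming from nonexpansiveness of $R$, I expect to obtain
\[
\|x^{k+1}-x^\ast\|^2 \;\le\; \|x^k-x^\ast\|^2 - \rho(1-\rho)\|R(x^k)-x^k\|^2 \;=\; \|x^k-x^\ast\|^2 - \tfrac{1-\rho}{\rho}\,\|f(x^k)\|^2 ,
\]
where the last equality uses the identity $R(x^k)-x^k = \rho^{-1}f(x^k)$, which follows by rearranging $g(x^k) = (1-\rho)x^k + \rho R(x^k)$.

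Summing this telescoping inequality over $k=0,\dots,N-1$ and using $\rho\in(0,1)$ yields $\sum_{k\ge 0}\|f(x^k)\|^2 \le \frac{\rho}{1-\rho}\|x-x^\ast\|^2 < \infty$, and hence $\|f(g^{[k]}(x))\| = \|f(x^k)\| \to 0$ as $k\to\infty$, which is exactly \ref{Atwo}. I do not anticipate a genuine obstacle here: the only point requiring a little care is correctly identifying $R(x^k)-x^k$ with $\rho^{-1}f(x^k)$ inside the quadratic identity, while the rest is the classical Krasnosel'skii--Mann asymptotic-regularity argument. An alternative, if one prefers, is to quote the asymptotic regularity of averaged operators from~\citemain{BauCom11} verbatim, which gives \ref{Atwo} without recomputation.
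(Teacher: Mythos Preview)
Your proof is correct and follows essentially the same route as the paper: for \ref{Aone} you use nonexpansiveness of $g$, and for \ref{Atwo} you invoke the Krasnosel'ski\u{\i}--Mann asymptotic-regularity argument, which is precisely what the paper does by citing Proposition~5.15(ii) of~\citemain{BauCom11}. The only difference is that you spell out the Fej\'er-monotone descent estimate and telescoping sum explicitly, whereas the paper simply quotes the result; you even anticipate this shortcut in your final remark.
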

	
	\begin{proof}
		By definition the $\rho$-averaged operator $g$ is also nonexpansive and thus, \ref{Aone} holds for $g$. To prove~\ref{Atwo}, let us set $y^0 := x$ and $y^{k+1} := g^{[k+1]}(x) = g(y^{k})$ for all $k$. By \refeq{Aone}, the sequence $\{\|f(y^{k})\|\}_k$ is monotonically decreasing. Therefore, we can assume that $\lim_{k\to\infty}\|f(y^{k})\|=\vartheta$. If $\vartheta>\nu$, then we may select $x^0\in \R^n$ such that $\|f(x^0)\|<\nu+\frac{1}{2}(\vartheta-\nu)$. Defining $x^{k+1}=g^{[k+1]}(x^0)=g(x^k)$ and applying \cite[Proposition 4.25(iii)]{BauCom11}, we have 
		$$\|x^{k+1}-y^{k+1}\|^2\leq  \|x^k-y^k\|^2 -\frac{1-\rho}{\rho}\|f(x^k)-f(y^k)\|^2.$$
		This yields 
		\[ \|x^{k+1}-y^{k+1}\|^2\leq  \|x^0-y^0\|^2 -\frac{1-\rho}{\rho}\sum_{i=0}^k\|f(x^i)-f(y^i)\|^2.         \]
		By the reverse triangle inequality and \ref{Aone}, we have
		\[  \| f(x^i)-f(y^i)\|\geq \|f(y^i)\|-\|f(x^i)\|\geq \vartheta-\|f(x^0)\|\geq \frac{1}{2}(\vartheta-\nu).           \]
		Combining with the previous inequality, we obtain
		\[   \|x^{k+1}-y^{k+1}\|^2\leq  \|x^0-y^0\|^2 -\frac{1-\rho}{2\rho}(k+1)(\vartheta-\nu).              \]
		Taking the limit $k\to\infty$, we reach a contradiction. So, we must have $\nu=\vartheta$, as desired.
		\qed
\end{proof}

\begin{remark} \label{rem:avop} Setting $\kappa$ (the Lipschitz constant of $g$) to $1$ in \eqref{eq2-conA}, we see that \ref{Aone} is always satisfied if $g$ is a nonexpansive operator. However, nonexpansiveness is not a necessary condition for \ref{Aone}.  In fact, we can construct an operator $g$ that is not nonexpansive but satisfies \ref{Aone} and \ref{Atwo}, e.g.,
$$ g : \R \to \R, \quad g(x) :=\left\{
\begin{aligned}
&0.5 x   & \text{if }x\in[0,1], \\
&0   & \text{otherwise.}  
\end{aligned}
\right.          $$
For any $x\in[0,1]$, we have $g^{[k]}(x) = 2^{-k}x$, $f(g^{[k]}(x)) = -2^{-(k+1)}x$ and it is not hard to verify \ref{Aone} and \ref{Atwo}. For any $x\notin [0,1]$ it follows $f(g(x))=f(0)=0$, thus \ref{Aone} and \ref{Atwo} also hold in this situation. However, since $g$ is not continuous, it can not be nonexpansive. 
\end{remark}

Because of Proposition~\ref{prop:avop}, our global convergence theory is applicable to a large class of iterative schemes. As an example, we show in the following that Assumption~\ref{assump:GlobalConvergence} is satisfied by forward-backward splitting, a popular optimization solver in machine learning.  

\begin{example} \label{ex:pgm}
Let us consider the nonsmooth optimization problem:
\begin{equation}
\min_{x \in \Rn}~r(x)+\varphi(x),
\label{eq:NonSmoothProblem}
\end{equation}
where both $r, \varphi : \Rn \to (-\infty,\infty]$ are proper, closed, and convex functions, and $r$ is  $L$-smooth. It is well known that $x^*$ is a solution to this problem if and only if it satisfies the nonsmooth equation:
\begin{equation*} \label{eq:opt-no} x^* - G_\mu(x^*) = 0, \quad  G_\mu(x) := \prox_{\mu\varphi}(x-\mu\nabla r(x)), \end{equation*}
where $\prox_{\mu\varphi}(x) := \argmin_y \varphi(y) + \frac{1}{2\mu} \|x-y\|^2$, $\mu > 0$, is the proximity operator of $\varphi$, see also Corollary 26.3 of \citemain{BauCom11}. We can then compute $x^*$ via the iterative scheme
\begin{equation}
	x^{k+1} = G_\mu(x^k).
	\label{eq:ForwardBackward}
\end{equation}
$G_\mu$ is known as the forward-backward splitting operator and it is a $\rho$-averaged operator for all $\mu\in(0,\frac{2}{L})$, see \citemain{byrne2014elementary}. Hence, Assumption~\ref{assump:GlobalConvergence} holds and our theory can be used to study the global convergence of Algorithm~\ref{algo1} applied to \eqref{eq:ForwardBackward}.    
\end{example}

\begin{remark}
	For problem~\eqref{eq:NonSmoothProblem}, it can be shown that Douglas-Rachford splitting, as well as its equivalent form of ADMM, can both be written as a $\rho$-averaged operator with $\rho \in (0,1)$ (see, e.g.,~\citemain{liang2016local}). Therefore, the applications considered in~\citemain{fu2019anderson} are also covered by Assumption~\ref{assump:GlobalConvergence}.
\end{remark}

We can now show the global convergence of Algorithm~\ref{algo1}: 
\begin{theorem}
\label{prop3-3}
Suppose Assumption~\ref{assump:GlobalConvergence} is satisfied and let $\{x^k\}$ be generated by Algorithm~\ref{algo1} with $\epsilon_f = 0$. Then
$$   \lim_{k\rightarrow\infty}\|f^k\|=\nu,          $$
where $\nu=\inf_{x\in\R^n}\|f(x)\|$.
\end{theorem}

\begin{proof}
	In the following, we will use $\mathcal S$ to denote the set of indices for all successful iterations, i.e., $\mathcal S := \{k: \rho_k \geq p_1\}$. 
	To simplify the notation, we introduce a function $\mathcal P:\mathbb{N}\to\mathbb{N}$ defined as
	$$  \mathcal P(k):=\max \left\{j \mid j \in{\argmin}_{i\in\{k,k-1,\ldots,k-{{m}} \}}\|f^i\|\right\}. $$
	Notice that the number $\mathcal P(k)$ coincides with $k_0$ for fixed $k$. 
	
	If Algorithm~\ref{algo1} terminates after a finite number of steps, the conclusion simply follows from the stopping criterion. Therefore, in the following, we assume that a sequence of iterates of infinite length is generated. We consider two different cases:
	\begin{description}[topsep=1ex,itemsep=1ex]
	\item[Case~1:] $|\mathcal{S}|<\infty$. Let $\bar{k}$ denote the index of the last successful iteration in $\mathcal{S}$ (we set $\bar k = 0$ if $\mathcal{S}=\emptyset$). We first show that $\mathcal{P}(k)=k$ for all $k\geq\bar k+1$. Due to $\bar k+1\notin\mathcal{S}$, it follows $x^{\bar k+1}=g(x^{\mathcal P(\bar k)})$ and by \ref{Aone}, this implies $\|f(x^{\bar k+1})\|\leq \|f(x^{\mathcal P(\bar k)})\|$. From the definition of $\mathcal P$, we have $\|f(x^{\mathcal P(\bar k)})\|\leq \|f^{\bar k-i}\|$ for every $0\leq i\leq \min\{m,\bar k\}$ and hence $\mathcal P(\bar k+1)=\bar k+1$. An inductive argument then yields $\mathcal{P}(k)=k$ for all $k\geq \bar k +1$. Notice that for any $k\geq \bar k+1$, we have $k\notin\mathcal S$ and $x^{k+1}=g(x^{\mathcal P(k)})=g(x^k)$. Utilizing (A.2), it follows that $\|f^k\| = \|f(g^{[k-\bar k]}(x^{\mathcal P(\bar k)}))\| \to \nu$ as $k \to \infty$.
	
	\item[Case~2:] $|\mathcal{S}|=\infty$. Let us denote 
	\begin{equation*}
		W_k:=\max_{k-m\leq i\leq k}\|f^i\|. 
	\end{equation*}
	We first show that the sequence $\{W_k\}$ is non-increasing.
	\begin{itemize}
		\item If $k\in\mathcal{S}$, then we have:
		\begin{align*}
			& p_1 \leq \rho_{k} = \frac{\ared{k}}{ \pred{k}}.
		\end{align*}
		We already know from Appendix~\ref{proof:PositivePredk} that $\pred{k} > 0$.
		Since $p_1 > 0$, it also holds that $\ared{k} >0$. Hence, if $c \|\hat f^k(\alpha^k)\| \leq \|f^{k+1}\|$, then using $\res{k} \leq W_k$ and \eqref{eq:esti-app-res} from Appendix~\ref{proof:PositivePredk}, we can derive:
			\begin{align*}
				p_1 & \leq \frac{\ared{k}}{ \pred{k}} 
				= 1 + \frac{c \|\hat f^k(\alpha^k)\| - \|f^{k+1}\|}{\pred{k}}= 1 + \frac{c \|\hat f^k(\alpha^k)\| - \|f^{k+1}\|}{r_k-c\|\hat f^k(\alpha^k)\|}  \\
				&\leq 1 + \frac{c \|\hat f^k(\alpha^k)\| - \|f^{k+1}\|}{W_k-c\|\hat f^k(\alpha^k)\|}=\frac{W_k-\|f^{k+1}\|}{W_k-c\|\hat f^k(\alpha^k)\|} 
				\leq \frac{W_k-\|f^{k+1}\|}{(1-c)W_k},
			\end{align*}
		which implies
		\begin{equation}
			\|f^{k+1}\|\leq c_p W_k,
			\label{eq:WkBound1} 
		\end{equation}
		where $c_p := 1-(1-c)p_1 < 1$. Otherwise, if $c \|\hat f^k(\alpha^k)\| > \|f^{k+1}\|$, then we have 
		\begin{equation}
			\|f^{k+1}\| \leq c W_k \leq c_p W_k.
			\label{eq:WkBound2}
		\end{equation}
		
		\item If $k \notin \mathcal S$, we have $x^{k+1}=g^{\mathcal P(k)}$. By Assumption~\ref{Aone}, it then follows that
		\begin{equation}
			\|f^{k+1}\|\leq \|f^{\mathcal P(k)}\|\leq W_{k}.
			\label{eq:WkBound3}
		\end{equation}
	\end{itemize}
	Eqs.~\eqref{eq:WkBound1}, \eqref{eq:WkBound2} and \eqref{eq:WkBound3} show that $\|f^{k+1}\| \leq W_{k}$. By definition of $W_{k}$, we then have $W_{k+1} \leq \max\{\|f^{k+1}\|, W_k\}  = W_k$. This shows that the sequence $\{W_k\}$ is non-increasing. Next, we verify 
	$$W_{k+m+1}\leq c_p W_k$$ 
	for all $k \in \mathcal S$. It suffices to prove that for any $i$ satisfying $k+1\leq i\leq k+m+1$, we have $\|f^i\|\leq c_p W_{k}$. Since we consider a successful iteration $k\in\mathcal{S}$, our previous discussion has already shown that $\|f^{k+1}\|\leq c_p W_k$. We now assume $\|f^i\|\leq c_pW_k$ for some $k+1\leq i\leq k+m$. If $i\in\mathcal S$, we obtain $\|f^{i+1}\|\leq c_p W_i \leq c_p W_k$. Otherwise, it follows that $\|f^{i+1}\|\leq \|f^{\mathcal P(i)}\| \leq \|f^i\|\leq c_p W_k$. Hence, by induction, we have $W_{k+m+1}\leq c_p W_k$ for all $k \in \mathcal S$. Since $\{W_k\}$ is non-increasing and we assumed $|\mathcal S|=\infty$, this establishes $W_k\rightarrow 0$ and $\|f^k\|\to 0$. In this case, we can infer $\nu=0$ and the proof is complete.
	\end{description}
	\qed
\end{proof}

\begin{remark}
This global result does not depend on the specific update rule for the regularization weight $\lambda_k$. Indeed, global convergence mainly results from our acceptance mechanism and hence, as a consequence of our proof, different update strategies for $\lambda_k$ can also be applied. Our choice of $\lambda_k$ in \eqref{eq:LambdaK}, however, will be essential for establishing local convergence of the method. 
\end{remark}

\subsection{Local Convergence Analysis} \label{sec:loc}

Next, we analyze the local convergence of our proposed approach, starting with several assumptions.
\begin{assumption}
\label{assum3-1}
The function $g : \Rn \to \Rn$ satisfies the following conditions:
\begin{enumerate}[label=\textup{\textrm{(B.\arabic*)}},leftmargin=8ex]
  \item\label{Bone} $g$ is Lipschitz continuous with a constant $\kappa<1$.
  \item\label{Btwo} $g$ is differentiable at $x^*$ where $x^*$ is the fixed point of the mapping $g$.
\end{enumerate}
\end{assumption}

\begin{remark}
\ref{Bone} is a standard assumption widely used in the local convergence analysis of $\AA$~\citemain{toth2015convergence,evans2020proof,scieur2016regularized,Scieur2020}. 
The existing analyses typically rely on the smoothness of $g$. In contrast, \ref{Btwo} allows $g$ to be nonsmooth and only requires it to be differentiable at the fixed point $x^*$, allowing our assumptions to cover a wider variety of methodologies such as forward-backward splitting and Douglas-Rachford splitting under appropriate assumptions, see Appendix~\ref{app:Verification}. We note that in \cite{bian2021anderson} the Lipschitz differentiability of $g$ is replaced by continuous differentiability around $x^*$, while we only assume differentiability at one point. This technical difference is based on the observation that an expansion of the residual $f^k$ is only required at the point $x^*$ and not at the iterates $x^k$ which allows to work with weaker differentiability requirements. We further note that $\AA$ has been investigated for nonsmooth $g$  in \citemain{zhang2018globally,fu2019anderson} but without local convergence analysis. Recent convergence results of $\AA$ for a scheme related to the proximal gradient method discussed in Example~\ref{ex:pgm} can also be found in~\citeappx{mai2019anderson}. While the local assumptions and derived convergence rates in~\citeappx{mai2019anderson} are somewhat similar to our local results, we want to highlight that the algorithm and analysis in~\citeappx{mai2019anderson}  are tailored to convex composite problems of the form \eqref{eq:NonSmoothProblem}. Moreover, the global results in~\citeappx{mai2019anderson} are shown for a second, guarded version of $\AA$ and are based on the strong convexity of the problem. In contrast and under conditions that are not stronger than the local assumptions in~\citeappx{mai2019anderson}, we will establish unified global-local convergence of our approach for general contractions. In Section~\ref{sec:Results}, we verify the conditions \ref{Bone} and \ref{Btwo} on the numerical examples, with a more detailed discussion in Appendix~\ref{app:Verification}.
\end{remark}

\begin{remark} \label{rem:this}
	\ref{Bone} implies that the function $g$ is contractive, which is a sufficient condition for \ref{Aone} and \ref{Atwo}. Thus, a function $g$ satisfying Assumption~\ref{assum3-1} will also fulfill Assumption~\ref{assump:GlobalConvergence} with $\nu = 0$.
\end{remark}

Similar to the local convergence analyses in~\citemain{toth2015convergence,evans2020proof}, we also work with the following condition:
\begin{assumption}
\label{assum3-4}
For the solution $\bar{\alpha}^k$ to the unregularized $\AA$ problem~\eqref{eq:PermutatedAlphaProblem}, 
there exists $M > 0$ such that $\|\bar{\alpha}^k\|_{\infty}\leq M$ for all $k$ sufficiently large.
\end{assumption}
\begin{remark}
The assumptions given in \citemain{toth2015convergence,evans2020proof} are formulated without permuting the last $m+1$ indices. We further note that we do not require the solution $\bar \alpha^k$ to be unique.
\end{remark}

The acceleration effect of the original $\AA$ scheme has only been studied very recently in \citemain{evans2020proof} based on slightly stronger assumptions. In particular, their result can be stated as
\begin{align}
\label{eq:Acceleration}
\|f(\hat{g}^k(\bar{\alpha}^k))\|\leq \kappa\theta_k\|f^{k_0}\|+\sum\nolimits_{i=0}^mO(\|f^{k-i}\|^2),
\end{align}
where 
\begin{equation} \label{eq:theta-k}   \theta_k:={\| \hat{f}^k(\bar{\alpha}^k)\|}/{\|f^{k_0}\|} \end{equation}
is an acceleration factor. 
Since $\bar{\alpha}^k$ is a solution to the problem~\eqref{eq:PermutatedAlphaProblem}, 
we have $\| \hat{f}^k(\bar{\alpha}^k)\| \leq \| \hat{f}^k(0)\| = \|f^{k_0}\|$ so that $\theta_k \in [0,1]$. Then~\eqref{eq:Acceleration} implies that for a fixed-point iteration that converges linearly with a contraction constant $\kappa$, $\AA$ can improve the convergence rate locally.
In the following, we will show that our globalized $\AA$ method possesses similar characteristics under weaker assumptions. 
 
We first verify that after finitely many iterations, every step $x^{k+1} = \hat g^k(\alpha^k)$ is accepted as a new iterate. Thus, our method eventually reduces to a pure regularized $\AA$ scheme.

\begin{theorem}
\label{lemma3-10}
Suppose that Assumptions~\ref{assum3-1} and~\ref{assum3-4} hold and let the constant $c$ in \eqref{eq3-4} be chosen such that $c\geq\kappa$. Then, the sequence $\{x^k\}$ generated by Algorithm~\ref{algo1} (with $\epsilon_f = 0$) either terminates after finitely many steps, or converges to the fixed point $x^*$ and there exists some $\ell\in\mathbb{N}$ such that $\rho_k\geq p_2$ for all $k\geq \ell$. In particular, every iteration $k \geq \ell$ is successful with $x^{k+1}=\hat{g}^k(\alpha^k)$. 
\end{theorem}
\begin{proof}
	Our proof consists of three steps. We first show the convergence of the whole sequence $\{x^k\}$ to the fixed point $x^*$. Afterwards we derive a bound for the residual $\|f(\hat g^k(\alpha^k))\|$ that can be used to estimate the actual reduction $\ared{k}$. In the third step, we combine our observations to prove the transition to the full $\AA$ method, i.e., we show that there exists some $\ell$ with $k\in\mathcal{S}$ for all $k\geq \ell$. 
	\begin{description}[topsep=1ex,itemsep=1ex]
	\item[Step 1:] Convergence of $\{x^k\}$. By~\ref{Bone}, $g$ is a contraction, i.e., for all $x \in \Rn$ we have
	\begin{equation} \|x - x^*\| \leq \|x-g(x)\| + \|g(x)-g(x^*)\| \leq \|f(x)\| + \kappa \|x-x^*\| \label{eq:con-lw} \end{equation}
	and it follows $\|f^k\| = \|f(x^k)\|\geq (1-\kappa)\|x^k-x^*\|$ for all $k$. Theorem~\ref{prop3-3} and Remark~\ref{rem:this} guarantee $\lim_{k\rightarrow\infty}\|f^k\|=0$ and hence, we can infer $x^k \to x^*$. 
	
	\item[Step 2:] Bounding $\|f(\hat g^k(\alpha^k))\|$. Introducing 
	\[ \hat{x}^k(\alpha^k) := x^{k_0} + {\sum}_{i=1}^m \alpha_i^k (x^{k_i} - x^{k_0}) \]
	and using~\ref{Bone}, we can bound the residual $\|f(\hat g^k(\alpha^k))\|$ directly as follows:
	\begin{align*}
		\notag \|f(\hat g^k(\alpha^k))\| & = \|g(\hat g^k(\alpha^k))-\hat g^k(\alpha^k)\|  \\
		&  \leq \|g(\hat g^k(\alpha^k))-g(\hat{x}^k(\alpha^k))\| +\|g(\hat{x}^k(\alpha^k))-\hat g^k(\alpha^k)\| \notag \\ 
		&  \leq \kappa \|\hat g^k(\alpha^k) - \hat{x}^k(\alpha^k)\| +\|g(\hat{x}^k(\alpha^k))-\hat g^k(\alpha^k)\| \notag \\
		&  = \kappa \|\hat f^k(\alpha^k)\| +\|g(\hat{x}^k(\alpha^k))-\hat g^k(\alpha^k)\|.
	\end{align*}
	We now continue to estimate the second term $\|g(\hat{x}^k(\alpha^k))-\hat g^k(\alpha^k)\|$. From the algorithmic construction and the definition of $\alpha^k$ and $\bar \alpha^k$, it follows that 
	\begin{equation} 
		\|\hat f^k(\alpha^k)\|^2 + \lambda_k \|\alpha^k\|^2 
		\leq \|\hat f^k(\bar\alpha^k)\|^2 + \lambda_k \|\bar\alpha^k\|^2 \leq \|\hat f^k(\alpha^k)\|^2 + \lambda_k \|\bar\alpha^k\|^2, \label{eq:alpha-k}
	\end{equation}
	which implies $\|\alpha^k\|_{\infty}\leq\|\alpha^k\|\leq\|\bar{\alpha}^k\|\leq \sqrt{m}\|\bar{\alpha}^k\|_\infty \leq \sqrt{m}M$ for all $k$. Defining $\nu^k=(\nu^k_0,\dots,\nu^k_m)\in\mathbb{R}^{m+1}$ with $\nu^k_0=1-\sum_{i=1}^m\alpha^k_i$ and $\nu^k_j = \alpha^k_j$ for $1\leq j \leq m$, we obtain
	\begin{align*}
		\hat g^k(\alpha) &= {\sum}_{i=0}^m\nu_i^k g(x^{k_i}), \quad \hat x^k(\alpha) = {\sum}_{i=0}^m\nu_i^k x^{k_i}, 
	\end{align*}
	$\|\nu^k\|_\infty \leq 1+m^{\frac32}M$, and $\sum_{i=0}^m \nu^k_i = 1$.
	Consequently, applying the estimate \eqref{eq:con-lw} derived in step 1, it follows
	\begin{align*}   
		\|\hat{x}^k(\alpha^k)-x^*\|  = \left \| {\sum}_{i=0}^m \nu_i^k (x^{k_i}-x^*) \right \|  & \leq (1+ m^{\frac32}M) {\sum}_{i=0}^m\|x^{k_i}-x^*\| \\ & \leq (1+ m^{\frac32}M)(1-\kappa)^{-1} {\sum}_{i=0}^m\|f^{k-i}\|
	\end{align*}
	which shows $\hat x^k(\alpha^k) \to x^*$ as $k \to \infty$. This also establishes
	\begin{equation} o(\|\hat{x}^k(\alpha^k)-x^*\|) = o\left({\sum}_{i=0}^m\|f^{k-i}\|\right) \quad k \to \infty. \label{eq:oo} \end{equation}
	Note that the differentiability of $g$ at $x^*$ -- as stated in Assumption~\ref{Btwo} -- implies $\|g(y) - g(x^*) - g^\prime(x^*)(y-x^*)\| = o(\|y-x^*\|)$ as $y \to x^*$. Applying this condition to different choices of $y$ and the boundedness of $\nu^k$, we can obtain
\begingroup
\allowdisplaybreaks
	\begin{align}
		& \hspace{-6ex} \|g(\hat x^k(\alpha^k)) - \hat g^k(\alpha^k) \| \notag\\ 
		= ~&\|g(\hat{x}^k(\alpha^k))-g(x^*) + g(x^*) - \hat g^k(\alpha^k)\| \notag\\ 
		\leq ~& \|g'(x^*)(\hat x^k(\alpha^k)-x^*) + g(x^*) - \hat g^k(\alpha^k) \|  +o(\|\hat x^k(\alpha^k)-x^*\|) \notag\\
		= ~& \left\| {\sum}_{i=0}^m \nu^k_i [g'(x^*)(x^{k_i}-x^*) + g(x^*) - g(x^{k_i})] \right\|  +o(\|\hat x^k(\alpha^k)-x^*\|) \notag\\
		\leq ~& {\sum}_{i=0}^m o(\|x^{k_i}-x^*\|) +o(\|\hat x^k(\alpha^k)-x^*\|) \leq o\left({\sum}_{i=0}^m\|f^{k-i}\|\right). \label{eq3-7}
	\end{align}
\endgroup
	Here, we also used \eqref{eq:con-lw}, \eqref{eq:oo}, and $\sum_{i=0}^m o(\|f^{k-i}\|) = o\left({\sum}_{i=0}^m\|f^{k-i}\|\right)$. Combining our results, this yields
	\begin{equation} \|f(\hat g^k(\alpha^k))\| \leq \kappa \|\hat f^k(\alpha^k)\| + o\left({\sum}_{i=0}^m\|f^{k-i}\|\right) \quad k \to \infty. \label{eq:res-esti} \end{equation}

	\item[Step 3:] Transition to fast local convergence. As in the proof of Theorem~\ref{prop3-3}, let us introduce 
	\[
	W_k := \max_{k-m\leq i\leq k}\|f^i\|. 
	\]
	Due to \eqref{eq:res-esti} and $o\left({\sum}_{i=0}^m\|f^{k-i}\|\right) = o(W_k)$ there then exists $\ell \in \N$ such that
	\[  \|f(\hat g^k(\alpha^k))\| \leq \kappa \|\hat f^k(\alpha^k)\| + (1-p_2)\min\{\gamma,1-c\}W_k \]
	for all $k \geq \ell$.  
	Hence, using $c\geq \kappa$, we have
	\begin{align*}
		\ared{k} & = \res{k} - \|f(\hat{g}^k(\alpha^k))\| 
		\geq \pred{k}- (1-p_2)\min\{\gamma,1-c\}W_k.
	\end{align*}
	Similarly, for the predicted reduction $\pred{k}$ we can show 
	\begin{align*}
		\pred{k} &= (1-\gamma m) \|f^{k_0}\| + \gamma {\sum}_{i=1}^{m} \|f^{k_i}\|-c\|\hat{f}^k(\alpha^k)\| \\ & \geq \gamma {\sum}_{i=0}^{m} \|f^{k_i}\| + (1-\gamma(m+1)) \|f^{k_0}\| - c \|f^{k_0}\| \\
		&\geq \gamma W_k+(1-\gamma)\|f^{k_0}\|-c\|f^{k_0}\|. 
	\end{align*}
	Thus, if $1-\gamma-c \geq 0$, we obtain $\pred{k} \geq \gamma W_k$. Otherwise, it follows $\pred{k} \geq (1-c) W_k$ and together this yields $\pred{k} \geq \min\{\gamma,1-c\}W_k$. Combining the last estimates, we can finally deduce
	$$ \frac{\ared{k}}{ \pred{k}}\geq \frac{\pred{k}- (1-p_2)\min\{\gamma,1-c\}W_k}{\pred{k}} \geq p_2,  $$
	which completes the proof.
	\end{description}
	\qed
\end{proof}
\begin{remark}
Our novel nonmonotone acceptance mechanism is the central component of our proof for Theorem~\ref{lemma3-10}, as it allows us to balance the additional error terms caused by an $\AA$ step.
\end{remark}

Next, we show that our approach can enhance the convergence of the underlying fixed-point iteration and that it has a local convergence rate similar to the original $\AA$ method as given in~\citemain{evans2020proof}.
\begin{theorem}
\label{thm:LocalConvergenceRate}
Suppose that Assumptions~\ref{assum3-1}, and~\ref{assum3-4} hold and let the parameters $c, \epsilon_{f}$ in Algorithm~\ref{algo1} satisfy $c\geq \kappa$ and $\epsilon_{f}=0$. Then, for $k \to \infty$ it holds that:
\begin{align*}
\|f^{k+1}\|\leq \kappa\theta_k\|f^{k_0}\|+o\left(\sum\nolimits_{i=0}^{m}\|f^{k-i}\|\right),
\end{align*}
where $\theta_k:=\|\hat{f}^k(\bar{\alpha}^k)\|/\|f^{k_0}\|$ is the corresponding acceleration factor. In addition, the sequence of residuals $\{\|f^k\|\}$ converges r-linearly to zero with a rate arbitrarily close to $\kappa$, i.e., for every $\eta \in (\kappa,1)$ there exist $C > 0$ and $\hat\ell \in \N$ such that
\[ \|f^k\| \leq C \eta^k \quad \forall~k \geq \hat\ell. \]
\end{theorem}

\begin{proof}
	Theorem~\ref{lemma3-10} implies $\rho_k\geq p_2$ for all $k \geq \ell$ and hence, from the update rule of Algorithm~\ref{algo1}, it follows that $$\mu_k=\eta_2\mu_{k-1} \quad \forall~k \geq \ell.$$ 
	Then by \eqref{eq:LambdaK}, we can infer $\lambda_k=o(\|f^{k_0}\|^2)$. Using Eq.~\eqref{eq:alpha-k} and Assumption~\ref{assum3-4}, this shows 
	\begin{equation} \|\hat{f}^k(\alpha^k)\|  \leq \|\hat{f}^k(\bar{\alpha}^k)\|+\sqrt{\lambda_k }\|\bar{\alpha}^k\| = \|\hat{f}^k(\bar{\alpha}^k)\|+o(\|f^{k_0}\|). \label{eq:app-e-esti} \end{equation}
	Thus, by \eqref{eq:res-esti}, we obtain
	\begin{align}
		\nonumber \|f^{k+1}\| \leq~ &\kappa\|\hat{f}^{k}(\alpha^k)\|+ o\left({\sum}_{i=0}^m \|f^{k-i}\|\right)\\ 
		\nonumber \leq~& \kappa \|\hat{f}^{k}(\bar{\alpha}^k)\|+o(\|f^{k_0}\|)+o\left({\sum}_{i=0}^m\|f^{k-i}\|\right) \\ 
		=~ & \kappa \theta_k \|f^{k_0}\|+ o\left({\sum}_{i=0}^m\|f^{k-i}\|\right), \label{eq:ineq-pp}
	\end{align}
	as desired. In order to establish r-linear convergence, we follow the strategy presented in \citeappx{toth2015convergence}. Let $\eta \in (\kappa,1)$ be a given rate. Then, due to $\|f^k\| \to 0$ and using \eqref{eq:ineq-pp}, there exists $\hat\ell \in \N$ such that 
	\begin{equation} \label{eq:ineq-pp2} \|f^{k+1}\| \leq \kappa\|{f}^{k_0}\|+ \bar \nu \cdot {\sum}_{i=0}^m \|f^{k-i}\| \end{equation}
	for all $k \geq \hat\ell$ where $\bar \nu := \frac{1-\eta}{1-\eta^{m+1}} \eta^m (\eta-\kappa)$. Defining $C := \eta^{-\hat\ell}  \max_{\hat\ell-m\leq i \leq \hat\ell} \|f^i\| = W_{\hat\ell}\,\eta^{-\hat\ell}$, we then have 
	\[ \|f^j\| \leq W_{\hat\ell} = (W_{\hat\ell}\,\eta^{-j}) \eta^j \leq (W_{\hat\ell}\,\eta^{-\hat\ell}) \eta^j = C \eta^j. \]
	for all $\hat\ell-m \leq j \leq \hat\ell$. We now claim that the statement $\|f^{k}\| \leq C \eta^k$ holds for all $k \geq \hat\ell$. As just shown, this is obviously satisfied for the base case $k = \hat\ell$. As part of the inductive step, let us assume that the estimate $\|f^{j}\| \leq C \eta^j$ holds for all $j = \hat\ell, \hat\ell+1,\ldots,k$. (In fact, this bound also holds for $j = \hat\ell-m,\ldots,\hat\ell-1$). By the definition of the index $k_0$, we have $\|f^{k_0}\| \leq C\eta^k$ and, due to \eqref{eq:ineq-pp2}, it follows
	\begin{align*} \|f^{k+1}\| & \leq \kappa\|{f}^{k_0}\|+ \bar \nu \cdot {\sum}_{i=0}^m \|f^{k-i}\|  \leq C\kappa\eta^k + C\bar \nu \eta^k {\sum}_{i=0}^m \left(\frac{1}{\eta}\right)^i \\ & = C\eta^k \left[ \kappa + \bar \nu \cdot  \frac{1-\eta^{-(m+1)}}{1-\eta^{-1}} \right]  = C\eta^k \left[ \kappa + \frac{\bar \nu}{\eta^m} \cdot  \frac{1-\eta^{m+1}}{1-\eta} \right] = C\eta^{k+1}. \end{align*} 
	Hence, our claim also holds for $k+1$ which finishes the induction and proof. 
	\qed
\end{proof}
	
Under a stronger differentiability condition and stricter update rule for $\lambda_k$, we can recover the same local rate as in~\citemain{evans2020proof}: 
 
\begin{corollary} \label{cor:stronger-loc} Let the assumptions stated in Theorem~\ref{thm:LocalConvergenceRate} hold and let $g$ satisfy the differentiability condition
%
\[ \|g(x) - g(x^*) - g^\prime(x^*)(x-x^*)\| = O(\|x-x^*\|^2) \;\; \text{as} \;\; x \to x^*. \]
Suppose that the weight $\lambda_k$ is updated via $\lambda_k = \mu_k \|f^{k_0}\|^4$. Then, for all $k$ sufficiently large we have 
\begin{align*}
\|f^{k+1}\|\leq \kappa\theta_k\|f^{k_0}\|+ \sum\nolimits_{i=0}^{m} O(\|f^{k-i}\|^2).
\end{align*}
\end{corollary}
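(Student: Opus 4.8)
The plan is to reuse the argument behind Theorem~\ref{thm:LocalConvergenceRate} almost verbatim, sharpening its two error sources: the linearization error of $g$ at the fixed point $x^*$, and the perturbation caused by the regularization term in~\eqref{eq:RegularizedOpt}. Under~\ref{Btwo} alone the first is only $o(\|x-x^*\|)$, whereas the extra condition $\|g(x)-g(x^*)-g'(x^*)(x-x^*)\|=O(\|x-x^*\|^2)$ makes it $O(\|x-x^*\|^2)$; combined with the two-sided bound $\|x^k-x^*\|\asymp\|f^k\|$ (an immediate consequence of~\ref{Bone}), this contributes $\sum_{i=0}^m O(\|f^{k-i}\|^2)$. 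For the regularization error, replacing the factor $\|f^{k_0}\|^2$ in~\eqref{eq:LambdaK} by $\|f^{k_0}\|^4$ turns the $o(\|f^{k_0}\|)$ contribution appearing in Theorem~\ref{thm:LocalConvergenceRate} into an $O(\|f^{k_0}\|^2)$ contribution, provided $\{\mu_k\}$ stays bounded.

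First I would note that Theorem~\ref{prop3-3} is independent of the update rule for $\lambda_k$, so $x^k\to x^*$ and $\|f^k\|\to 0$ still hold. Next I would re-establish the conclusion of Theorem~\ref{lemma3-10} for the modified rule, namely that $\{\mu_k\}$ is bounded and $x^{k+1}=\hat g^k(\alpha^k)$ for all large $k$. The proof of Theorem~\ref{lemma3-10} only uses that, once $x^k$ is close to $x^*$, the regularized step is close to the plain AA step as soon as $\lambda_k$ is small relative to the scale $\|f^{k_0}\|^2$ of problem~\eqref{eq:PermutatedAlphaProblem}. Since $\lambda_k=\mu_k\|f^{k_0}\|^4=(\mu_k\|f^{k_0}\|^2)\|f^{k_0}\|^2$ and $\|f^{k_0}\|\to0$, this smallness holds automatically whenever $\mu_k$ is bounded; a short induction, starting from the finite value of $\mu_k$ at the first iterate lying in a suitable neighborhood of $x^*$, then shows $\rho_k\geq p_2$ from that point on, so $\{\mu_k\}$ is non-increasing thereafter and in particular $\lambda_k=O(\|f^{k_0}\|^4)$.

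With eventual acceptance in hand, I would decompose $f^{k+1}=f(\hat g^k(\alpha^k))=f(\hat g^k(\bar\alpha^k))+\bigl(f(\hat g^k(\alpha^k))-f(\hat g^k(\bar\alpha^k))\bigr)$, where $\bar\alpha^k$ solves the unregularized problem~\eqref{eq:PermutatedAlphaProblem}. For the first term I would invoke the acceleration estimate~\eqref{eq:Acceleration}, which is exactly the regime of~\citemain{evans2020proof} and is available under the present strengthened differentiability hypothesis, giving $\|f(\hat g^k(\bar\alpha^k))\|\leq\kappa\theta_k\|f^{k_0}\|+\sum_{i=0}^m O(\|f^{k-i}\|^2)$. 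For the second term, $f=g-I$ is $(1+\kappa)$-Lipschitz by~\ref{Bone}, so it suffices to bound $\|\hat g^k(\alpha^k)-\hat g^k(\bar\alpha^k)\|$. Comparing the normal equations $(J^TJ+\lambda_kI)\alpha^k=-J^Tf^{k_0}$ and $J^TJ\bar\alpha^k=-J^Tf^{k_0}$ with $J=[f^{k_1}-f^{k_0},\dots,f^{k_m}-f^{k_0}]$ yields $\alpha^k-\bar\alpha^k=-\lambda_k(J^TJ+\lambda_kI)^{-1}\bar\alpha^k$, hence $\|\alpha^k-\bar\alpha^k\|\leq\|\bar\alpha^k\|$ and, using the singular-value bound $\|J(J^TJ+\lambda_kI)^{-1}\|\leq\tfrac12\lambda_k^{-1/2}$, also $\|\hat f^k(\alpha^k)-\hat f^k(\bar\alpha^k)\|=\|J(\alpha^k-\bar\alpha^k)\|\leq\tfrac12\sqrt{\lambda_k}\,\|\bar\alpha^k\|$; Assumption~\ref{assum3-4} bounds $\|\bar\alpha^k\|$ and $\lambda_k=O(\|f^{k_0}\|^4)$ then give $O(\|f^{k_0}\|^2)$. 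Finally, the strengthened differentiability of $g$ at $x^*$ together with invertibility of $A:=g'(x^*)-I$ (which follows from $\|g'(x^*)\|\leq\kappa<1$ by~\ref{Bone}) gives $g^{k_i}-g^{k_0}=(I+A^{-1})(f^{k_i}-f^{k_0})+\sum_{j=0}^m O(\|f^{k-j}\|^2)$, so that $\hat g^k(\alpha^k)-\hat g^k(\bar\alpha^k)=(I+A^{-1})J(\alpha^k-\bar\alpha^k)+E_k(\alpha^k-\bar\alpha^k)$ with a matrix $E_k$ satisfying $\|E_k\|=\sum_{j}O(\|f^{k-j}\|^2)$; both summands are $\sum_{i=0}^m O(\|f^{k-i}\|^2)$ by the previous bounds and $\|\alpha^k-\bar\alpha^k\|\leq\|\bar\alpha^k\|\leq\sqrt m\,M$. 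Collecting the two terms yields the stated inequality.

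I expect the main obstacle to be the second step — re-deriving the conclusion of Theorem~\ref{lemma3-10} (boundedness of $\{\mu_k\}$ and eventual acceptance) for the faster-decaying weight, since the original proof is tailored to the rule~\eqref{eq:LambdaK}. One has to verify that the estimates used there are monotone in $\lambda_k$ and organize the induction so that the smallness of $\lambda_k=\mu_k\|f^{k_0}\|^4$ is not itself presupposed; everything afterwards is routine perturbation bookkeeping.
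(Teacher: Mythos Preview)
Your proposal is correct, but the paper takes a shorter route. First, your ``main obstacle'' dissolves: the proof of Theorem~\ref{lemma3-10} uses only $\|\alpha^k\|\le\|\bar\alpha^k\|$ (from the optimality inequality~\eqref{eq:alpha-k}, valid for any $\lambda_k>0$) to bound the affine coefficients, so its conclusion---and hence $\mu_k\to0$---holds verbatim under the new rule, giving $\lambda_k=o(\|f^{k_0}\|^4)$ without any induction. Second, the paper does not split at the level of $\hat g^k$ but at the level of $\hat f^k$: it sharpens Step~2 of the proof of Theorem~\ref{lemma3-10} to $\|f(\hat g^k(\alpha^k))\|\le\kappa\|\hat f^k(\alpha^k)\|+\sum_{i}O(\|f^{k-i}\|^2)$ using the $O(\|x-x^*\|^2)$ hypothesis, and then bounds $\|\hat f^k(\alpha^k)\|\le\|\hat f^k(\bar\alpha^k)\|+\sqrt{\lambda_k}\,\|\bar\alpha^k\|=\theta_k\|f^{k_0}\|+o(\|f^{k_0}\|^2)$ directly from~\eqref{eq:alpha-k}. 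This sidesteps your normal-equation manipulation, the singular-value bound on $J(J^TJ+\lambda_kI)^{-1}$, and the $(I+A^{-1})$ relation linking $g^{k_i}-g^{k_0}$ to $f^{k_i}-f^{k_0}$. Your route has the minor advantage of needing only boundedness of $\{\mu_k\}$ rather than $\mu_k\to0$, but the paper's argument is a two-line perturbation of results already proved. One small caution: you invoke~\eqref{eq:Acceleration} as a black box, yet the paper notes that estimate was obtained under stronger smoothness; here it must be re-derived from the corollary's own hypothesis via the same Step~2 argument (which works equally well with $\bar\alpha^k$ in place of $\alpha^k$).
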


\begin{proof} As mentioned in the remark after Theorem~\ref{prop3-3}, our global results do still hold if a different update strategy is used for the weight parameter $\lambda_k$. Moreover, the proof of Theorem~\ref{lemma3-10} does also not depend on the specific choice of $\lambda_k$. Consequently, we only need to improve the bound \eqref{eq:res-esti} for $\|f(\hat g^k(\alpha^k))\|$ derived in step 2 of the proof of Theorem~\ref{lemma3-10}. Using the additional differentiability property $\|g(y) - g(x^*) - g^\prime(x^*)(y-x^*)\| = O(\|y-x^*\|^2)$, $y \to x^*$, we can directly improve the estimate for $\|g(\hat x^k(\alpha^k)) - \hat g^k(\alpha^k)\|$ in \eqref{eq3-7} as follows: 
	\begin{align*} \|g(\hat x^k(\alpha^k)) - \hat g^k(\alpha^k)\| \leq {\sum}_{i=0}^m O(\|x^{k_i}-x^*\|^2) + O(\|\hat x^k(\alpha^k)-x^*\|^2). \end{align*}
	Using the bound $(\sum_{i=0}^m y_i)^2 \leq (m+1) \sum_{i=0}^m y_i^2$ for $y \in \R^{m+1}$, we obtain $\|\hat x^k(\alpha^k)-x^*\|^2 = \sum_{i=0}^m O(\|f^{k-i}\|^2)$ and thus, mimicking and combining the derivations in step 2 of the proof of Theorem~\ref{lemma3-10}, we have
	\[ \|g(\hat x^k(\alpha^k)) - \hat g^k(\alpha^k)\| \leq {\sum}_{i=0}^m O(\|f^{k-i}\|^2) \] 
	and
	\begin{equation} \label{eq:eq-pp-3} \|f(\hat g^k(\alpha^k))\| \leq \kappa \|\hat f^k(\alpha^k)\| + {\sum}_{i=0}^m O(\|f^{k-i}\|^2) \end{equation}
	as $k \to \infty$. As in the previous proof, we can now infer $\mu_k \to 0$ (this follows from $\rho_k \geq p_2$ for all $k$ sufficiently large) and $\lambda_k = o(\|f^{k_0}\|^4)$. Furthermore, as in \eqref{eq:app-e-esti}, due to Eq.~\eqref{eq:alpha-k} and Assumption~\ref{assum3-4}, it holds that $\|\hat f^k(\alpha^k)\| \leq \|\hat f^k(\bar\alpha^k)\| + o(\|f^{k_0}\|^2)$. Combining this result with \eqref{eq:eq-pp-3}, we can then establish the convergence rate stated in Corollary~\ref{cor:stronger-loc}. 
\qed
\end{proof}

\begin{remark}
The stronger differentiability condition, which was also used in~\citemain{evans2020proof} and other local analyses, is, e.g., satisfied when the derivative $g^\prime$ is locally Lipschitz continuous around $x^*$. More discussions of this property can also be found in Appendix~\ref{app:Verification}. We note that under this type of stronger differentiability, we can only improve the order of the remainder linearization error terms and not the linear rate of convergence.
\end{remark}

\section{Numerical Experiments}
\label{sec:Results}
We verify the effectiveness of our method by applying it to several existing numerical solvers and comparing its convergence speed with the original solvers. 
We also include the acceleration approaches from~\citemain{Scieur2020,fu2019anderson} for comparison.
The regularized nonlinear acceleration (RNA) proposed in~\citemain{Scieur2020} computes an accelerated iterate via an affine combination of the previous $k$ iterates, and it also introduces a quadratic regularization when computing the affine combination coefficients. Unlike our approach, it performs an acceleration step every $k$ iterations instead of every iteration, and its regularization weight is determined by a grid search that finds the weight that leads to the lowest target function value at the accelerated iterate. 
The A2DR scheme proposed in~\cite{fu2019anderson} is a globalization of $\AA$ applied on Douglas-Rachford splitting, using a quadratic regularization together  with an acceptance mechanism based on sufficient decrease of the residual.
All experiments are carried out on a laptop with a Core-i7 9750H at 2.6GHz and 16GB of RAM. 
The source code for the examples in this section is available at \url{https://github.com/bldeng/Nonmonotone-AA}.

Our method involves several parameters. The parameters $p_1$, $p_2$, $\eta_1$ and $\eta_2$, used for determining acceptance of the trial step and updating the regularization weight, are standard parameters for trust-region methods. We choose $p_1=0.01$, $p_2=0.25$, $\eta_1=2$, $\eta_2=0.25$ by default.
The parameter $\gamma$ affects the convex combination weights in computing $\res{k}$ in Eq.~\eqref{eq:ReisdualCombination}, and we choose  $\gamma = 10^{-4}$.
For the parameter $c$ in the definition of $\pred{k}$, we choose $c = \kappa$ where $\kappa < 1$ is a Lipschitz constant for the function $g$,  to satisfy the conditions for Theorems~\ref{lemma3-10} and \ref{thm:LocalConvergenceRate}. 
We will derive the value of $\kappa$ in each experiment.
The initial regularization factor $\mu_0$ is set to $\mu_0 = 1$ unless stated otherwise.
Concerning the number $m$ of previous iterates used in an $\AA$ step, we can make the following observations: a larger $m$ tends to reduce the number of iterations required for convergence, but also increases the computational cost per iteration; our experiments suggest that choosing $5 \leq m \leq 20$ often achieves a good balance.
For each experiment below, we will include multiple choices of $m$ for comparison.  
Appendix~\ref{app:ablation} provides some further ablation studies for the parameters $p_1$, $p_2$, $\eta_1$, $\eta_2$, and $c$.

\subsection{Logistic Regression}

First, to compare our method with the RNA scheme proposed by~\citemain{Scieur2020}, we consider the following logistic regression problem from~\citemain{Scieur2020} that optimizes a decision variable $x \in \mathbb{R}^n$:
\begin{equation}
	\min_{x}~F(x),
\end{equation}
where 
\begin{equation}
F(x)=\frac{1}{N}\sum\nolimits_{i=1}^N \log(1+\exp(-b_ia_i^Tx))+\frac{\tau}{2}\|x\|^2,
\label{eq:LogisticRegression}
\end{equation}
and $a_i \in \mathbb{R}^n$, $b_i \in \{-1, 1\}$ are the attributes and label of the data point $i$, respectively. 
Following~\citemain{Scieur2020}, we consider gradient descent solver $x^{k+1} = g(x^k)$ with a fixed step size:
$$g(x) = x-\frac{2}{L_F+\tau}\nabla F(x),$$
where
\begin{equation}
L_F= \tau + \frac{\|A\|_2^2}{4N}
\end{equation}
is the Lipschitz constant of $\nabla F$, and $A=[a_1,...,a_N]^T\in\mathbb{R}^{N\times n}$.
Then $g$ is Lipschitz continuous with modulus 
\[
\kappa = \frac{L_F-\tau}{L_F+\tau} < 1
\]
and differentiable, which satisfies Assumption~\ref{assum3-1}.
We apply our approach (denoted by ``LM-AA'') and RNA to this solver, and compare their performance on two datasets: \texttt{covtype}\footnote{\url{https://archive.ics.uci.edu/ml/datasets/covertype}} (54 features, 581012 points), and \texttt{sido0}\footnote{\url{http://www.causality.inf.ethz.ch/data/sido0_matlab.zip}} (4932 features, 12678 points). For each dataset, we normalize the attributes and solve the problem  with $\tau = L_F / 10^{6}$ and $\tau = L_F / 10^{9}$, respectively. 
For the implementation of RNA, we use the source code released by the authors of~\citemain{Scieur2020}\footnote{\url{https://github.com/windows7lover/RegularizedNonlinearAcceleration/tree/master/Matlab/src}}.
We set $\mu_0 = 100$, and $m = 10, 15, 20$, respectively for our method.
RNA performs an acceleration step every $k$ iterations, and we test $k=5, 10, 20$, respectively. All other RNA parameters are set to their default values as provided in the source code (in particular, with grid-search adaptive regularization weight and line search enabled). Fig.~\ref{fig:LogisticRegression} plots for each method the normalized target function value $(F(x^k) - F^*)/F^*$ 
with respect to the iteration count and computational time, where $F^\ast$ is the ground-truth global minimum computed by running each method until full convergence and taking the minimum function value among all methods. All variants of LM-AA and RNA accelerate the decrease of the target function compared with the original gradient descent solver, with our methods achieving an overall faster decrease. 
\begin{figure}[t!]
	\centering
	\includegraphics[width=\textwidth]{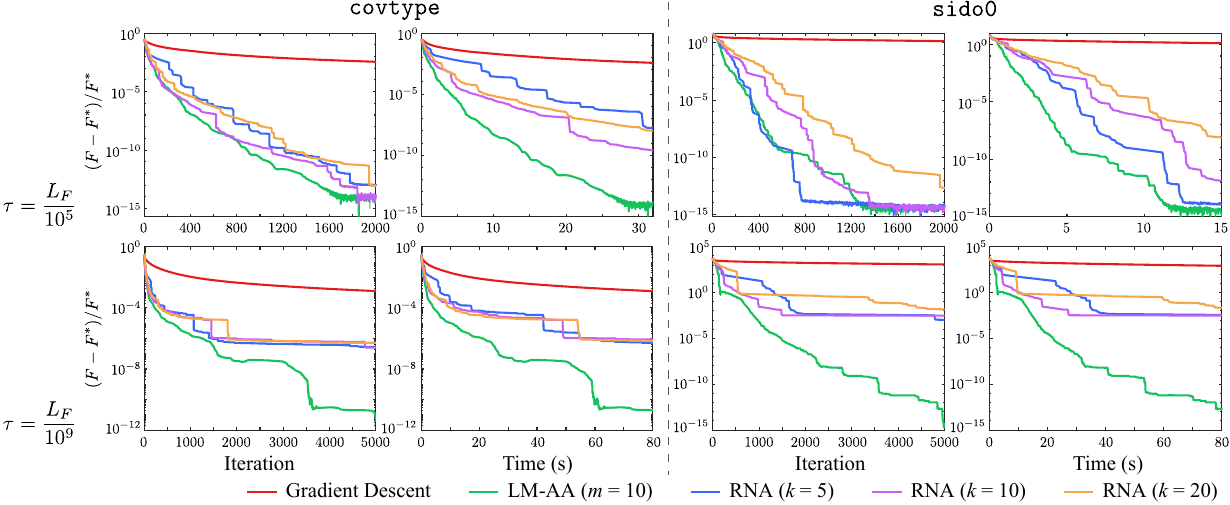}
	\caption{Comparison between RNA~\protect\cite{Scieur2020} and our method on a gradient descent solver for the logistic regression problem~\eqref{eq:LogisticRegression} for the \texttt{covtype} and \texttt{sido0} datasets, with a different choice of parameter $\tau$ in each row.}
	\label{fig:LogisticRegression}
\end{figure}

\subsection{Image Reconstruction}

Next, we consider a nonsmooth problem proposed in~\citemain{wang2008new} for total variation based image reconstruction:
\begin{equation}
	\min_{w,u} ~{\sum}_{i=1}^{N^2}\|w_i\|_2+\frac{\beta}{2}{\sum}_{i=1}^{N^2}\|w_i-D_i u\|_2^2+\frac{\fidweight}{2}\|Ku-s\|^2_2, 
	\label{prob-TV}
\end{equation}
where $s\in [0,1]^{N^2}$ is an $N \times N$ input image, $u\in\mathbb{R}^{N^2}$ is the output image to be optimized, $K\in\mathbb{R}^{N^2\times N^2}$ is a linear operator,  $D_i \in\mathbb{R}^{2 \times N^2}$ represents the discrete gradient operator at pixel $i$, $w = (w_{1}^T,\ldots,w_{N^2}^T)^T \in \R^{2N^2}$ are auxiliary variables for the image gradients, $\fidweight > 0$ is a fidelity weight, and $\beta > 0$ is a penalty parameter.
The solver in~\citemain{wang2008new} can be written as alternating minimization between $u$ and $w$ as follows:
\begin{align}
	u^{k+1} & = \argmin_{u} \frac{\beta}{2}{\sum}_{i=1}^{N^2}\|w_i^k-D_i u\|_2^2+\frac{\fidweight}{2}\|Ku-s\|^2_2, \label{eq:UUpdate}\\
	w^{k+1} &= \argmin_{w} {\sum}_{i=1}^{N^2}\|w_i\|_2+\frac{\beta}{2}{\sum}_{i=1}^{N^2}\|w_i-D_i u^{k+1}\|_2^2 \label{eq:WUpdate}.
\end{align}
The solutions to the subproblems \eqref{eq:UUpdate} and \eqref{eq:WUpdate} can both be computed in a closed form.
When $\beta$ and $\fidweight$ are fixed, this can be treated as a fixed-point iteration $w^{k+1} = g(w^k)$, and it satisfies Assumption~\ref{assum3-1} (see Appendix~\ref{sec:app-tv} for a detailed derivation of $g$ and verification of Assumption~\ref{assum3-1}).
In the following, we consider the solver with $K = I$ and $\fidweight = 4$ for image denoising.
In this case, condition~\ref{Bone} is satisfied with 
\[
\kappa = 1-\left(1+\frac{4\beta}{\fidweight}\right)^{-1}
\] 
(see Appendix~\ref{sec:app-tv} for the derivation).
We apply this solver to a $1024 \times 1024$ image with added Gaussian noise that has a zero mean and a variance of $\sigma = 0.05$  (see Fig.~\ref{fig:TVDenoising}).
\begin{figure}[t!]
	\centering
	\includegraphics[width=\textwidth]{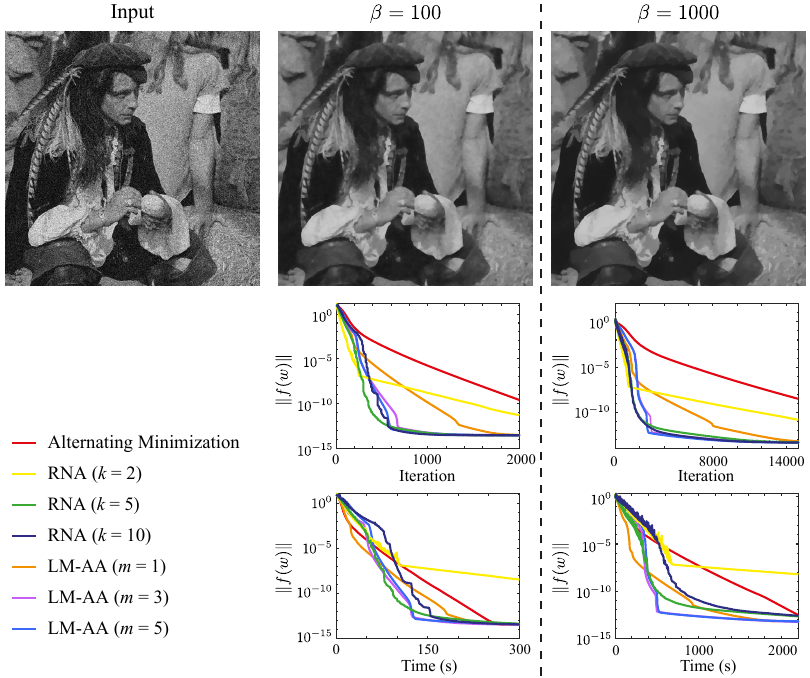}
	\caption{Application of our method to an alternating minimization solver for an image denoising problem~\eqref{prob-TV}.}
	\label{fig:TVDenoising}
\end{figure}
We use the source code released by the authors of~\citemain{wang2008new}\footnote{\url{https://www.caam.rice.edu/~optimization/L1/ftvd/v4.1/}} for the implementation of this solver, and apply our acceleration method with $m = 1, 3, 5$ respectively.
For comparison, we also apply the RNA scheme with $k = 2, 5, 10$, respectively. Here we choose smaller values of $m$ and $k$ than the logistic regression example because both RNA and our method have a high relative overhead on this problem, which means that larger values of $m$ or $k$ may induce overhead that offsets the performance gain from acceleration.
Similar to the logistic regression example, we use the released source code of RNA for our experiments, and set all RNA parameters to their default values. 
Fig.~\ref{fig:TVDenoising} plots the residual norm $\|f(w)\|$ for all methods, with $\beta = 100$ and $\beta=1000$, respectively. 
All instances of acceleration methods converge faster to a fixed point than the original alternating minimization solver, except for RNA with $k=2$ which is slower in terms of the actual computational time due to its overhead for the grid search of regularization parameters. Overall, the two acceleration approaches achieve a rather similar performance on this problem.

\subsection{Nonnegative Least Squares}

Finally, to compare our method with~\citemain{fu2019anderson}, we consider a nonnegative least squares (NNLS) problem that is used in~\citemain{fu2019anderson} for evaluation: 
\begin{equation}
\label{eq:reNNLS}
\min_{x}~\psi(x)+\varphi(x),
\end{equation}
where $x=(x_1,x_2)\in\mathbb{R}^{2q}$, $\psi(x)=\|Hx_1-t\|_2^2+\mathcal{I}_{x_2\geq 0}(x_2)$, and $\varphi(x)=\mathcal{I}_{x_1=x_2}(x)$, 
with $\mathcal{I}_S$ being the indicator function of the set $S$. The Douglas-Rachford splitting (DRS) solver for this problem can be written as
\begin{equation} v^{k+1} =g(v^k) ={\textstyle{\frac{1}{2}}}((2\prox_{\beta\varphi}-I)(2\prox_{\beta \psi}-I)+I)v^k
\label{eq:NNLSDRS}
\end{equation}
where $v^{k}=(v_1^k,v_2^k)\in\mathbb{R}^{2q}$ is an auxiliary variable for DRS and $\beta$ is the penalty parameter. 
In~\citemain{fu2019anderson}, the authors use their regularized $\AA$ method (A2DR) to accelerate the DRS solver~\eqref{eq:NNLSDRS}. 
To apply our method, we verify in Appendix~\ref{appx:NNLS} that if $H$ is of full column rank, then $g$ satisfies condition~\ref{Bone} with 
$$\kappa = \frac{\sqrt{3+c_1^2}}{2} < 1$$ where $c_1=\max\{\frac{\beta\sigma_1-1}{\beta\sigma_1+1},\frac{1-\beta\sigma_0}{1+\beta\sigma_0}\}$, and $\sigma_0,\sigma_1$ are the minimal and maximal eigenvalues of $2H^T H$, respectively. Moreover, $g$ is also differentiable under a mild condition.
We compare our method with A2DR on the solver~\eqref{eq:NNLSDRS}, with the same $\AA$ parameters $m = 10, 15, 20$.
The methods are tested using a $600 \times 300$ sparse random matrix $H$ with $1\%$ nonzero entries and a random vector $t$. We use the source code released by the authors of~\citemain{fu2019anderson}\footnote{\url{https://github.com/cvxgrp/a2dr}} for the implementation of A2DR, and set all A2DR parameters to their default values. While A2DR and DRS are implemented using parallel evaluation of the proximity operators in the released A2DR code, we implement our method as a single-threaded application for simplicity. 
Fig~\ref{fig:NNLS} plots the residual norm $\|f(v)\|$ for DRS and the two acceleration methods. It also plots the norm of the overall residual $r = (r_{\mathrm{prim}}, r_{\mathrm{dual}})$ used in~\citemain{fu2019anderson} for measuring convergence, where $r_{\mathrm{prim}}$ and $r_{\mathrm{dual}}$ denote the primal and dual residuals as defined in Equations~(7) and (8) of~\citemain{fu2019anderson}, respectively. For both residual measures, the original DRS solver converges slowly after the initial iterations, whereas the two acceleration methods achieve significant speedup.
Moreover, the single-threaded implementation of our method outperforms the parallel A2DR with the same $m$ parameter, in terms of both iteration count and computational time.
\begin{figure}[t!]
	\centering
	\includegraphics[width=0.95\columnwidth]{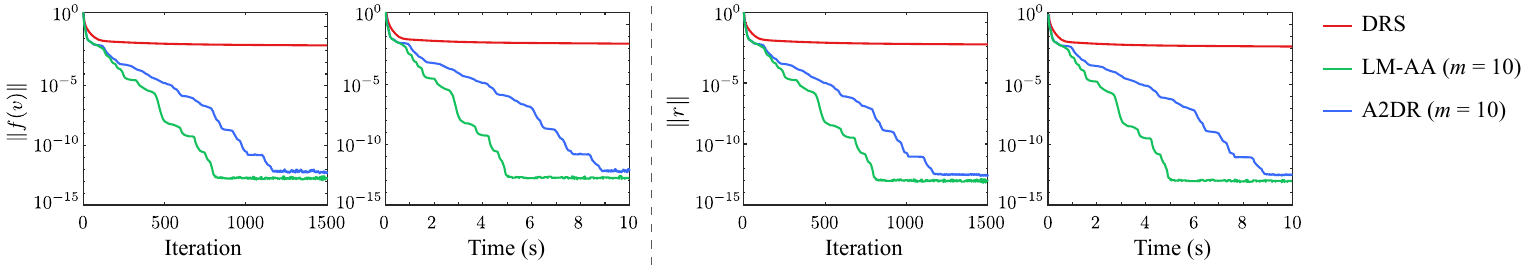}
	\caption{Comparison between A2DR~\protect\citemain{fu2019anderson} and our method on the NNLS solver~\eqref{eq:NNLSDRS} with an $600 \times 300$ sparse random matrix $H$ and a random vector $t$. }
	\label{fig:NNLS}
\end{figure}

\subsection{Statistics of Successful Steps}
Our acceptance mechanism plays a key role in achieving global and local convergence of the proposed method. 
To demonstrate its behavior, we provide statistics of the successful steps in Figs.~\ref{fig:LogisticRegression},  \ref{fig:TVDenoising} and \ref{fig:NNLS}.  Specifically, for each instance of LM-AA, we count the total steps required to reach a certain level of accuracy and we compare it with the corresponding number of successful $\AA$ steps within these steps.
Tables~\ref{tab:01}, \ref{tab:02}, \ref{tab:03}, and \ref{tab:04} show the statistics of successful steps for Figs.~\ref{fig:LogisticRegression},  \ref{fig:TVDenoising} and \ref{fig:NNLS}, respectively. Here, besides the total number of steps, we report the success rate which is defined as the ratio between successful and total steps required to reach different levels of accuracy.

\begin{table*}[t!]
	\caption{Statistics of successful steps of LM-AA for the logistic regression problem \eqref{eq:LogisticRegression} and the dataset \texttt{covtype}. In each of the columns \textit{iter}, we report the number of iterations required to satisfy $(F(x^k)-F^*)/F^* \leq \textit{tol}$ with $\textit{tol} \in \{10^{-3},10^{-6},10^{-9},10^{-12},10^{-15}\}$. The columns \textit{$s$-rate} show the corresponding success rate of $\AA$-steps, i.e., \textit{$s$-rate} is  the ratio between successful and total steps required to reach the different accuracies.}
	\label{tab:01}
	\centering
	\setlength{\tabcolsep}{3pt}
	\begin{tabular}{|c||c|c|cl|cl|cl|cl|cl|}
		\hline
		\multirow{9}{*}{\STAB{\rotatebox[origin=c]{90}{\texttt{covtype}}}} &  & \multirow{3}{*}{$m$} & \multicolumn{10}{c|}{{\textit{tol} \quad (stopping criterion: $(F-F^*)/F^* \leq \textit{tol}$)}} \\ 
		\cline{4-13}
		\rule{0pt}{10pt} & & & \multicolumn{2}{c|}{$10^{-3}$} & \multicolumn{2}{c|}{$10^{-6}$} & \multicolumn{2}{c|}{$10^{-9}$} & \multicolumn{2}{c|}{$10^{-12}$} & \multicolumn{2}{c|}{$10^{-15}$} \\
		& & & \textit{$s$-rate} & \textit{iter} & \textit{$s$-rate} & \textit{iter} & \textit{$s$-rate} & \textit{iter} & \textit{$s$-rate} & \textit{iter}  & \textit{$s$-rate} & \textit{iter} \\
		\cline{2-13}
		\rule{0pt}{8pt} & \multirow{3}{*}{$\tau = \frac{L_F}{10^6}$} & 10 & 72.3\% & 83 & 94.3\% & 599 & 98.3\% & 2000 & 98.3\% & 2000 & 98.3\% & 2000 \\
		\rule{0pt}{6pt} & & 15 & 57.3\% & 82 & 88.0\% & 541 & 93.8\% & 1175 & 96.1\% & 1887 & 96.4\% & 2000 \\
		\rule{0pt}{6pt} & & 20 & 49.0\% & 98 & 73.5\% & 434 & 87.8\% & 1058 & 90.5\% & 1351 & 92.9\% & 1812 \\
		\cline{2-13}
		\rule{0pt}{8pt} & \multirow{3}{*}{$\tau = \frac{L_F}{10^9}$} & 10 & 86.6\% & 119 & 95.7\% & 1083 & 96.3\% & 3336 & 96.7\% & 3755 & 96.7\% & 3781 \\
		\rule{0pt}{6pt} & & 15 & 55.4\% & 83 & 86.0\% & 783 & 32.0\% & 5000 & 32.0\% & 5000 & 32.0\% & 5000  \\
		\rule{0pt}{6pt} & & 20 & 50.5\% & 91 & 73.2\% & 593 & 18.8\% & 5000 & 18.8\% & 5000 & 18.8\% & 5000 \\
		\hline
	\end{tabular}
\end{table*}

\begin{table*}[t!]
	\caption{Statistics of successful steps of LM-AA for the logistic regression problem \eqref{eq:LogisticRegression} and the dataset \texttt{sido0}. In each of the columns \textit{iter}, we report the number of iterations required to satisfy the criterion $(F(x^k)-F^*)/F^* \leq \textit{tol}$ with $\textit{tol} \in \{10^{-3},10^{-6},10^{-9},10^{-12},10^{-15}\}$. The columns \textit{$s$-rate} show the corresponding success rate of $\AA$-steps, i.e., \textit{$s$-rate} is  the ratio between successful and total steps required to reach the different accuracies.}
	\label{tab:02}
	\centering
	\setlength{\tabcolsep}{3pt}
	\begin{tabular}{|c||c|c|cl|cl|cl|cl|cl|}
		\hline
		\multirow{9}{*}{\STAB{\rotatebox[origin=c]{90}{\texttt{sido0}}}} &  & \multirow{3}{*}{$m$} & \multicolumn{10}{c|}{{\textit{tol} \quad (stopping criterion: $(F-F^*)/F^* \leq \textit{tol}$)}} \\ 
		\cline{4-13}
		\rule{0pt}{10pt} & & & \multicolumn{2}{c|}{$10^{-3}$} & \multicolumn{2}{c|}{$10^{-6}$} & \multicolumn{2}{c|}{$10^{-9}$} & \multicolumn{2}{c|}{$10^{-12}$} & \multicolumn{2}{c|}{$10^{-15}$} \\
		& & & \textit{$s$-rate} & \textit{iter} & \textit{$s$-rate} & \textit{iter} & \textit{$s$-rate} & \textit{iter} & \textit{$s$-rate} & \textit{iter}  & \textit{$s$-rate} & \textit{iter} \\
		\cline{2-13}
		\rule{0pt}{8pt} & \multirow{3}{*}{$\tau = \frac{L_F}{10^6}$} & 10 & 67.1\% & 334 & 82.9\% & 686 & 88.3\% & 997 & 92.7\% & 1621 & 93.8\% & 2000 \\
		\rule{0pt}{6pt} & & 15 & 55.6\% & 315 & 77.7\% & 676 & 84.8\% & 994 & 89.7\% & 1481 & 91.8\% & 1992 \\
		\rule{0pt}{6pt} & & 20 & 46.2\% & 368 & 65.4\% & 619 & 76.7\% & 917 & 85.3\% & 1453 & 88.8\% & 2000 \\
		\cline{2-13}
		\rule{0pt}{8pt} & \multirow{3}{*}{$\tau = \frac{L_F}{10^9}$} & 10 & 48.3\% & 1208 & 64.6\% & 2051 & 67.1\% & 3006 & 53.5\% & 4774 & 51.9\% & 4950 \\
		\rule{0pt}{6pt} & & 15 & 46.5\% & 1815 & 59.5\% & 2714 & 60.5\% & 3788 & 50.9\% & 5000 & 50.9\% & 5000 \\
		\rule{0pt}{6pt} & & 20 & 46.6\% & 2205 & 57.8\% & 3037 & 57.8\% & 4090 & 51.7\% & 5000 & 51.7\% & 5000 \\
		\hline
	\end{tabular}
\end{table*}

\begin{table*}[t!]
	\caption{Statistics of successful steps of LM-AA for the image denoising problem \eqref{prob-TV}. In each of the columns \textit{iter}, we report the number of iterations required to satisfy the criterion $\|f(w^k)\| \leq \textit{tol}$ with $\textit{tol} \in \{10^{-3},10^{-6},10^{-9},10^{-12},10^{-15}\}$. The columns \textit{$s$-rate} show the success rate of $\AA$-steps, i.e., \textit{$s$-rate} is  the ratio between successful and total steps required to reach the different accuracies.}
	\label{tab:03}
	\centering
	\setlength{\tabcolsep}{3pt}
	\begin{tabular}{|c|c|cl|cl|cl|cl|cl|}
		\hline
		& \multirow{3}{*}{$m$} & \multicolumn{10}{c|}{{\textit{tol} \quad (stopping criterion: $\|f(w)\| \leq \textit{tol}$)}} \\ 
		\cline{3-12}
		\rule{0pt}{10pt} & & \multicolumn{2}{c|}{$10^{-3}$} & \multicolumn{2}{c|}{$10^{-6}$} & \multicolumn{2}{c|}{$10^{-9}$} & \multicolumn{2}{c|}{$10^{-12}$} & \multicolumn{2}{c|}{$10^{-15}$} \\
		& & \textit{$s$-rate} & \textit{iter} & \textit{$s$-rate} & \textit{iter} & \textit{$s$-rate} & \textit{iter} & \textit{$s$-rate} & \textit{iter}  & \textit{$s$-rate} & \textit{iter} \\
		\cline{1-12}
		\rule{0pt}{8pt} \multirow{3}{*}{$\beta = 100$} & 1 & 93.7\% & 190 & 96.6\% & 378 & 98.4\% & 812 & 99.0\% & 1274 & 91.2\% & 2000 \\
		\rule{0pt}{6pt} & 3 & 39.5\% & 223 & 54.4\% & 296 & 72.0\% & 483 & 79.7\% & 666 & 64.3\% & 2000 \\
		\rule{0pt}{6pt} & 5 & 38.8\% & 227 & 53.4\% & 298 & 68.8\% & 446 & 75.9\% & 577 & 54.8\% & 2000 \\
		\cline{1-12}
		\rule{0pt}{8pt} \multirow{3}{*}{$\beta = 1000$} & 1 & 92.1\% & 1057 & 90.5\% & 1489 & 95.9\% & 3477 & 97.9\% & 7677 & 75.7\% & 15000 \\
		\rule{0pt}{6pt} & 3 & 34.6\% & 1458 & 39.6\% & 1824 & 56.7\% & 2554 & 68.8\% & 3543 & 47.6\% & 15000 \\
		\rule{0pt}{6pt} & 5 & 35.0\% & 1410 & 38.5\% & 1776 & 60.6\% & 2773 & 68.0\% & 3408 & 43.0\% & 15000 \\
		\hline
	\end{tabular}
\end{table*}

\begin{table*}[t!]
	\caption{Statistics of successful steps of LM-AA for the nonnegative least squares problem \eqref{eq:reNNLS}. In each of the columns \textit{iter}, we report the number of iterations required to satisfy the criterion $\|f(w^k)\| \leq \textit{tol}$ with $\textit{tol} \in \{10^{-3},10^{-6},10^{-9},10^{-12},10^{-15}\}$. The columns \textit{$s$-rate} show the success rate of $\AA$-steps, i.e., \textit{$s$-rate} is the ratio between successful and total steps required to reach the different accuracies.}
	\label{tab:04}
	\centering
	\setlength{\tabcolsep}{3pt}
	\begin{tabular}{|c|cl|cl|cl|cl|cl|}
		\hline
		\multirow{3}{*}{$m$} & \multicolumn{10}{c|}{{\textit{tol} \quad (stopping criterion: $\|f(v)\| \leq \textit{tol}$)}} \\ 
		\cline{2-11}
		\rule{0pt}{10pt} & \multicolumn{2}{c|}{$10^{-3}$} & \multicolumn{2}{c|}{$10^{-6}$} & \multicolumn{2}{c|}{$10^{-9}$} & \multicolumn{2}{c|}{$10^{-12}$} & \multicolumn{2}{c|}{$10^{-15}$} \\
		& \textit{$s$-rate} & \textit{iter} & \textit{$s$-rate} & \textit{iter} & \textit{$s$-rate} & \textit{iter} & \textit{$s$-rate} & \textit{iter}  & \textit{$s$-rate} & \textit{iter} \\
		\cline{1-11}
		\rule{0pt}{8pt} 5 & 100\% & 646 & 100\% & 1500 & 100\% & 1500 & 100\% & 1500 & 100\% & 1500 \\
		\rule{0pt}{6pt} 10 & 100\% & 235 & 100\% & 512 & 100\% & 696 & 93.3\% & 989 & 67.5\% & 1500 \\
		\rule{0pt}{6pt} 15 & 100\% & 151 & 100\% & 437 & 100\% & 680 & 100\% & 792 & 58.0\% & 1500 \\
		\rule{0pt}{6pt} 20 & 100\% & 162 & 100\% & 344 & 100\% & 465 & 100\% & 572 & 44.4\% & 1500 \\
		\hline
	\end{tabular}
\end{table*}

The results in Table~\ref{tab:04} demonstrate that essentially all $\AA$ steps are accepted in the nonnegative least squares problem. This observation is also independent of the choice of the parameter $m$. More specifically, the success rate of $\AA$ steps only decreases and more alternative fixed-point iterations are performed when we seek to solve the problem with the highest accuracy $\textit{tol} = 10^{-15}$. Table~\ref{tab:03} illustrates that a similar behavior can also be observed for the image denoising problem \eqref{prob-TV} when setting $m=1$. Notice that this high accuracy is close to machine precision and hence this effect is mainly caused by numerical errors and inaccuracies that affect the computation and quality of an $\AA$ step. The results in Table~\ref{tab:03} also demonstrate a second typical effect: the success rate of $\AA$ step is often lower when the chosen accuracy is relatively low. With increasing accuracy, the rate then increases to around 70\%--80\%. This general observation is also supported by our results for logistic regression, see Tables~\ref{tab:01} and \ref{tab:02}. (Here the maximum success rate is more sensitive to the choice of $m$, $\tau$, and of the dataset). 

In summary, the statistics provided in Tables~\ref{tab:01}, \ref{tab:02}, \ref{tab:03}, and \ref{tab:04} support our theoretical results. The success rate of $\AA$ steps gradually increases as the iteration gets closer to the fixed point, which indicates a transition to a pure regularized $\AA$ scheme. Furthermore, as more $\AA$ steps seem to be rejected at the beginning of the iterative procedure, our globalization mechanism effectively guarantees global progress and convergence of the approach. 


\section{Conclusions}
We propose a novel globalization technique for Anderson acceleration which combines adaptive quadratic regularization and a nonmonotone acceptance strategy. We prove the global convergence of our approach under mild assumptions. Furthermore, we show that the proposed globalized $\AA$ scheme has the same local convergence rate as the original $\AA$ iteration and that the globalization mechanism does not hinder the acceleration effect of $\AA$. This is one of the first $\AA$ globalization methods that achieves global convergence and fast local convergence simultaneously. Several numerical examples illustrate that our method is competitive and it can improve the efficiency of a variety of numerical solvers.

%
%

{
\bibliographystyle{spmpsci}      
\bibliography{Commonbib}   
}

\section*{Statements and Declarations}

\paragraph{Funding} 
A.~Milzarek was partly supported by the Fundamental Research Fund -- Shenzhen Research Institute for Big Data (SRIBD) Startup Fund JCYJ-AM20190601.
B.~Deng was partly supported by the Guangdong International Science and Technology Cooperation Project (No. 2021A0505030009).

\paragraph{Competing Interests}
The authors have no relevant financial or non-financial interests to disclose.

\paragraph{Data Availability}
The datasets generated during and/or analysed during the current study are available in the GitHub repository \url{https://github.com/bldeng/Nonmonotone-AA}. 

\appendix
\section{Proof that $\pred{k}$ in Eq.~\eqref{eq:ReisdualCombination} is Positive}
\label{proof:PositivePredk}
\begin{proof}
By the definition of the minimization problem~\eqref{eq:RegularizedOpt}, we have
\begin{align*}
	\|\hat{f}^k(\alpha^k)\|^2 & \leq \|\hat{f}^k(\alpha^k)\|^2 + \lambda_k \|\alpha^k\|^2 \leq \|\hat{f}^k(0)\|^2 +  \lambda_k \|0\|^2 = \|f^{k_0}\|^2.
\end{align*}
Then Eqs.~\eqref{eq:ReisdualCombination}, \eqref{eq:PermutationRule} and $c \in (0,1)$ imply that 
\begin{equation} \label{eq:esti-app-res}
r_k \geq \|f^{k_0}\| \geq \|\hat{f}^k(\alpha^k)\| \geq  c \|\hat{f}^k(\alpha^k)\|.
\end{equation}
By the algorithmic construction we know $\|f^{k_0}\|>0$. So if $\|\hat{f}^k(\alpha^k)\|=0$ then the second inequality is strict, otherwise the third inequality is strict. Overall we can deduce that $\pred{k}= r_k - c \|\hat{f}^k(\alpha^k)\|$ must be positive.
\end{proof}

\section{Verification of Local Convergence Assumptions}
\label{app:Verification}

In this section, we briefly discuss different situations that allow us to verify and establish the local conditions stated in Assumption~\ref{assum3-1} and required for Corollary~\ref{cor:stronger-loc}. 

\subsection{The Smooth Case} \label{sec:app-smo}

Clearly, assumption \ref{Btwo} is satisfied if $g$ is a smooth mapping. 

In addition, as mentioned at the end of subsection~\ref{sec:loc}, if the mapping $g$ is continuously differentiable in a neighborhood of its associated fixed-point $x^*$, then the stronger differentiability condition \vspace{.5ex}

\begin{enumerate}[label=\textup{\textrm{(C.\arabic*)}},topsep=0pt,itemsep=0.5ex,partopsep=0ex,parsep=0ex,leftmargin=8ex]
\item \label{Cone} $\|g(x)-g(x^*) - g^\prime(x^*)(x-x^*)\| = O(\|x-x^*\|^2)$ for $x \to x^*$, \vspace{.5ex}
\end{enumerate}
used in Corollary~\ref{cor:stronger-loc}, holds if the derivative $g^\prime$ is locally Lipschitz continuous around $x^*$, i.e., for any $x,y \in \mathbb{B}_\epsilon(x^*)$ we have
\begin{equation} \label{eq:lip-fre} \|g^\prime(x) - g^\prime(y)\| \leq L \|x-y\|. \end{equation}
Assumption \ref{Cone} can then be shown via utilizing a Taylor expansion. Let us notice that for \ref{Cone} it is enough to fix $y = x^*$ in \eqref{eq:lip-fre}. Such a condition is known as outer Lipschitz continuity at $x^*$. Furthermore, assumption \ref{Bone} holds if $\sup_{x \in \Rn} \|g^\prime(x)\| < 1$, see, e.g., Theorem 4.20 of~\citeappx{Bec14}.

\subsection{Total Variation Based Image Reconstruction} \label{sec:app-tv}

The alternating minimization solver for image reconstruction problem~\eqref{prob-TV} can be written as a fixed-point iteration 
\begin{equation} \label{eq:tv-fix} 
w^{k+1} = g(w^k) := (\Phi\circ h)(w^k),
\end{equation}
with 
\begin{align*}
\Phi(w) &:= (s_\beta(w_{1})^T,\ldots,s_\beta(w_{N^2})^T)^T \in \mathbb{R}^{2N^2}, \quad s_\beta(x) :=\max\left\{\|x\|-\frac{1}{\beta},0\right\}\frac{x}{\|x\|},
\end{align*}
and $h(w) := DM^{-1}(D^T w+ ({\nu}/{\beta})K^Tf$, $D := (D_1^T,\ldots,D_{N^2}^T)^T$, and $M :=D^TD+({\nu}/{\beta})K^TK$.
Let us notice that the mapping $h$ and the fixed point iteration \eqref{eq:tv-fix} are well-defined when the null spaces of $K$ and $D$ have no intersection, see, e.g., Assumption~1 of~\citeappx{wang2008new}. 

Next, we verify Assumption~\ref{assum3-1} for the mapping $g$ in \eqref{eq:tv-fix}.
\begin{proposition}
\label{prop4-2}
Suppose that the operator $K^TK$ is invertible. Then, the spectral radius $\rho(T)$ of $T := DM^{-1}D^T$ fulfills $\rho(T)<1$ and condition \ref{Bone} is satisfied.
\end{proposition}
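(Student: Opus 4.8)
The plan is to show that the linear map $T = DM^{-1}D^T$ is symmetric positive semidefinite with all eigenvalues strictly below $1$, and then use this to control the Lipschitz constant of $g = \Phi \circ h$. First I would record the elementary fact that the soft-shrinkage operator $s_\beta$ is nonexpansive on $\R^2$ (it is the proximity operator of $\|\cdot\|$, hence $1$-Lipschitz), so that the block map $\Phi$ is nonexpansive on $\R^{2N^2}$. Thus $\|g(w)-g(v)\| = \|\Phi(h(w))-\Phi(h(v))\| \leq \|h(w)-h(v)\| = \|T(w-v)\|$, and it suffices to prove $\|T\| = \rho(T) < 1$; this simultaneously gives the desired contraction constant $\kappa = \rho(T)$ for \ref{Bone}. (Differentiability \ref{Btwo} of $g$ at the fixed point would follow separately from differentiability of $s_\beta$ away from the kink, which I would address with the mild genericity condition referenced in the main text; the proposition as stated only claims \ref{Bone}.)

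The core estimate is that $\rho(T) < 1$. I would argue as follows. Since $M = D^TD + \tfrac{\nu}{\beta}K^TK$ is symmetric, and under the hypothesis that $K^TK$ is invertible (equivalently $\tfrac{\nu}{\beta}K^TK \succ 0$) we get $M \succ D^TD \succeq 0$, so $M$ is symmetric positive definite and $M^{-1}$ is well-defined and SPD. Consequently $T = DM^{-1}D^T$ is symmetric positive semidefinite, so $\rho(T) = \|T\| = \max_{\|u\|=1} \langle DM^{-1}D^T u, u\rangle$. Writing $v = D^T u$, this equals $\max \langle M^{-1}v, v\rangle$ over $v$ in the range of $D^T$ with $\|u\|=1$. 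The key inequality is $\langle M^{-1} v, v\rangle < \langle (D^TD)^{\dagger} v, v \rangle$-type reasoning; more cleanly, I would use the substitution $y = M^{-1/2}D^T u$ and write $\langle DM^{-1}D^Tu,u\rangle = \|M^{-1/2}D^Tu\|^2$. Then $\|u\|^2 - \langle Tu,u\rangle = \langle (I - DM^{-1}D^T)u,u\rangle$, and I would show $I - DM^{-1}D^T \succ 0$ by a Schur-complement / Woodbury argument: $I - DM^{-1}D^T$ is the Schur complement structure associated with the block matrix $\begin{pmatrix} I & D \\ D^T & M\end{pmatrix}$, and since $M - D^TD = \tfrac{\nu}{\beta}K^TK \succ 0$, the other Schur complement $M - D^T I^{-1} D \succ 0$ forces the whole block matrix to be positive definite, hence $I - DM^{-1}D^T \succ 0$. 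Therefore every eigenvalue of $T$ lies in $[0,1)$, giving $\rho(T) < 1$.

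Combining, $\|g(w)-g(v)\| \leq \rho(T)\|w-v\|$ with $\rho(T) < 1$, which is exactly \ref{Bone} with $\kappa = \rho(T)$; I would also note this matches the claimed value $\kappa = 1-(1+4\beta/\nu)^{-1}$ in the special case $K=I$, where $M = D^TD + \tfrac{\nu}{\beta}I$ and one can diagonalize $D^TD$ to compute $\rho(T) = \max_j \tfrac{\sigma_j}{\sigma_j + \nu/\beta}$ over eigenvalues $\sigma_j$ of $D^TD$; using $\sigma_{\max}(D^TD) \leq 8$ (or the relevant bound for the discrete gradient, which with the $4$ in the text corresponds to $\|D\|^2 \le 4$) yields the stated closed form. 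The main obstacle I anticipate is making the Schur-complement step fully rigorous when $D^TD$ is singular (which it is — constants are in the kernel of $D$): the argument must use positivity of $\tfrac{\nu}{\beta}K^TK$ on \emph{all} of $\R^{N^2}$, not merely on the range of $D^T$, and I would be careful that the block-matrix positive-definiteness genuinely needs the full-rank hypothesis on $K^TK$ (or the weaker "null spaces of $K$ and $D$ intersect trivially" condition already invoked for well-posedness). Everything else is routine linear algebra and the nonexpansiveness of proximity operators.
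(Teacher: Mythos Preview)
Your argument is correct. The reduction to $\|g(w)-g(v)\|\le\|T(w-v)\|$ via nonexpansiveness of the block shrinkage $\Phi$ is exactly what is needed, and your Schur-complement step is sound: with $I\succ 0$ and $M-D^T D=\tfrac{\nu}{\beta}K^TK\succ 0$ the block matrix $\bigl(\begin{smallmatrix} I & D\\ D^T & M\end{smallmatrix}\bigr)$ is positive definite, and since $M\succ 0$ the other Schur complement $I-DM^{-1}D^T\succ 0$ follows, giving $\rho(T)<1$.

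The paper takes a closely related but slightly different route: it applies the Sherman--Morrison--Woodbury identity to obtain the explicit formula
\[
I-T=\bigl(I+\tfrac{\beta}{\nu}\,D(K^TK)^{-1}D^T\bigr)^{-1},
\]
from which $\sigma(T)\subset\bigl[0,\,1-(1+\tfrac{\beta}{\nu}\|D(K^TK)^{-1}D^T\|)^{-1}\bigr]$ is read off directly; the Lipschitz bound is then quoted from \citeappx{wang2008new}. Your Schur-complement argument is more elementary and self-contained (no Woodbury, no external reference for the contraction), but it yields only the qualitative $\rho(T)<1$; the paper's identity immediately gives a quantitative upper bound, which in the case $K=I$ and $\|D\|\le 2$ specializes to $\rho(T)\le 1-(1+4\beta/\nu)^{-1}$ without the separate diagonalization you sketch. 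The two arguments are really two faces of the same block-matrix computation, so neither is deeper; the Woodbury version just packages the quantitative constant more conveniently.
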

\begin{proof}
Utilizing the Sherman-Morrison-Woodbury formula and the invertibility of $K^T K$, we obtain
\begin{align}  
	(I+\xi D(K^TK)^{-1}D^T)^{-1} & = I - \xi D(K^TK)^{-1}(I + \xi D^TD(K^TK)^{-1})^{-1} D^T = I-T, 
	\label{eq:smw} 
\end{align}
where $\xi := {\beta}/{\nu}$. Due to $\lambda_{\min}(I+\xi D(K^TK)^{-1}D^T) \geq 1$ and 
$\lambda_{\max}(I+\xi D(K^TK)^{-1}D^T) \leq 1 + \xi \|D(K^TK)^{-1}D^T\|,$
it then follows that
\begin{align*}
\sigma((I+\xi D(K^TK)^{-1}D^T)^{-1}) \subset \left [\frac{1}{1+\xi\|D(K^TK)^{-1}D^T\|},1 \right],
\end{align*}
where $\sigma(\cdot)$ denotes the spectral set of a matrix. Combining this observation with \eqref{eq:smw}, we have
\[ \sigma(T)\subset \left[0,1-\frac{1}{1+\xi\|D(K^TK)^{-1}D^T\|}\right] \quad \implies \quad \rho(T)<1. \]
Furthermore, following the proof of Theorem 3.6 in~\citeappx{wang2008new}, it holds that
$$ \|g(w)-g(v)\|  \leq \rho(T)\|w-v\| \quad \forall~w,v \in\mathbb{R}^{2N^2}  $$
and hence, assumption \ref{Bone} is satisfied. \qed
\end{proof}

Concerning assumption \ref{Btwo}, it can be shown that $g$ is twice continuously differentiable on the set 
$$\mathcal W =\{w: \|[h(w)]_{i}\|\neq {1}/{\beta}, \,\forall~i=1,\ldots,N^2\}.$$ 
(In this case the max-operation in the shrinkage operator $s_\beta$ is not active). 
Moreover, since $h$ is continuous, the set $\mathcal W$ is open. Consequently, for every point $w \in \mathcal W$, we can find a bounded open neighborhood $N(w)$ of $w$ such that $N(w) \subset \mathcal W$. Hence, if the mapping $g$ has a fixed-point $w^*$ satisfying $w^* \in \mathcal W$, then we can infer that $g$ is differentiable on $N(w^*)$ and assumption \ref{Btwo} has to hold at $w^*$. Furthermore, the stronger assumption \ref{Cone} for Corollary~\ref{cor:stronger-loc} is satisfied as well in this case. Finally, if $K$ is the identity matrix, then notice that the finite difference matrix $D$ satisfies that $\|D\|\leq 2$ and we can infer $\rho(T)\leq 1-(1+4\beta/\nu)^{-1}$ which 
justifies the choice of $c$ in our algorithm.

\subsection{Nonnegative Least Squares}
\label{appx:NNLS}
We first note that given the specific form of $\varphi$, we can calculate the proximity operator $\varphi$ explicitly as 
\[
\prox_{\beta\varphi}(v)=\frac12((v_1+v_2)^T,(v_1+v_2)^T)^T,
\]
where $v=(v_1^T,v_2^T)^T$. Consequently, we obtain $[2\prox_{\beta\varphi}-I](v)=(v_2^T,v_1^T)^T.$ Similarly, by setting 
$ \psi_1(v_1) := \|Hv_1 - t\|_2^2$ and $\psi_2(v_2) := \mathcal I_{v_2\geq0}(v_2),$
we have
\begin{align*} 
\prox_{\beta \psi}(v) & = 
\begin{pmatrix} 
	\prox_{\beta \psi_1}(v_1) \\
	\prox_{\beta \psi_2}(v_2) 
\end{pmatrix},\\
\prox_{\beta \psi_1}(v_1) &= 
(H^T H + (2\beta)^{-1}I )^{-1}(H^T t + v_1 /({2\beta})),\\
\prox_{\beta \psi_2}(v_2) &= \mathcal P_{[0,\infty)^q}(v_2),
\end{align*}
where $\mathcal P_{[0,\infty)^q}$ denotes the Euclidean projection onto the set of nonnegative numbers $[0,\infty)^q$. In the next proposition, we give a condition to establish \ref{Bone} for $g$. 
\begin{proposition}
Let $\sigma_0$ and $\sigma_1$ denote the minimum and maximum eigenvalue of $2H^TH$, respectively and suppose $\sigma_0 > 0$. Then, the mapping $g$ is Lipschitz continuous with modulus ${\sqrt{3+c_1^2}}/{2}$, where $c_1=\max\{\frac{\beta\sigma_1-1}{\beta\sigma_1+1},\frac{1-\beta\sigma_0}{1+\beta\sigma_0}\} < 1$.
\end{proposition}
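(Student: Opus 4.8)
The plan is to decompose the Douglas--Rachford operator $g=\tfrac12(SR+I)$, where $S:=2\prox_{\beta\varphi}-I$ and $R:=2\prox_{\beta\psi}-I$, estimate each reflector separately, and — crucially — keep the two coordinate blocks $v=(v_1^T,v_2^T)^T$, $v_i\in\R^q$, apart so that the cross terms can be exploited. First I would record the structure of the two reflectors. By the computation already given in the excerpt, $S$ is the block swap $S(v)=(v_2^T,v_1^T)^T$, which is a linear isometry; in particular $\|S w\|=\|w\|$ and $S$ commutes with differences. The reflector $R$ acts blockwise, $R(v)=(R_1(v_1)^T,R_2(v_2)^T)^T$, where $R_1:=2\prox_{\beta\psi_1}-I$ is affine with linear part $A:=2(2\beta H^TH+I)^{-1}-I$ (obtained by simplifying the formula for $\prox_{\beta\psi_1}$), and $R_2:=2\mathcal P_{[0,\infty)^q}-I$ is a reflected projection onto a closed convex set, hence nonexpansive. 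Since $A$ is symmetric with eigenvalues $\tfrac{1-\beta\mu}{1+\beta\mu}$ as $\mu$ ranges over $\sigma(2H^TH)\subseteq[\sigma_0,\sigma_1]$, and since $t\mapsto\tfrac{1-\beta t}{1+\beta t}$ is monotone with modulus at most $1$ on $(0,\infty)$ because $\sigma_0>0$, the maximum of $|\tfrac{1-\beta\mu}{1+\beta\mu}|$ over the spectrum is attained at $\sigma_0$ or $\sigma_1$; checking the sign cases of $1-\beta\sigma_0$ and $1-\beta\sigma_1$ identifies this maximum with $c_1=\max\{\tfrac{\beta\sigma_1-1}{\beta\sigma_1+1},\tfrac{1-\beta\sigma_0}{1+\beta\sigma_0}\}<1$. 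Thus $\|A\|=c_1$ and $R_1$ is $c_1$-Lipschitz.

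Next, for fixed $v,v'$ I would set $\delta v:=v-v'$ with blocks $\delta v_1,\delta v_2$, and $\delta r:=R(v)-R(v')$ with blocks satisfying $\|\delta r_1\|\le c_1\|\delta v_1\|$ and $\|\delta r_2\|\le\|\delta v_2\|$. Because $S$ is linear, $g(v)-g(v')=\tfrac12(S\delta r+\delta v)$, so expanding the square and using $\|S\delta r\|=\|\delta r\|$ together with the swap structure of $S$ (which gives $\langle S\delta r,\delta v\rangle=\langle\delta r_2,\delta v_1\rangle+\langle\delta r_1,\delta v_2\rangle$) yields, via Cauchy--Schwarz and the blockwise Lipschitz bounds,
\[ 4\|g(v)-g(v')\|^2=\|\delta r\|^2+2\langle S\delta r,\delta v\rangle+\|\delta v\|^2\le c_1^2\|\delta v_1\|^2+\|\delta v_2\|^2+2(1+c_1)\|\delta v_1\|\,\|\delta v_2\|+\|\delta v\|^2. \]

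To close the estimate, write $a:=\|\delta v_1\|$, $b:=\|\delta v_2\|$; the right-hand side equals $(1+c_1^2)a^2+2b^2+2(1+c_1)ab$, and the difference $(3+c_1^2)(a^2+b^2)-\big[(1+c_1^2)a^2+2b^2+2(1+c_1)ab\big]=2a^2+(1+c_1^2)b^2-2(1+c_1)ab$ is a quadratic form in $(a,b)$ with positive leading coefficient and discriminant $4(1+c_1)^2-8(1+c_1^2)=-4(1-c_1)^2\le 0$, hence nonnegative. Therefore $4\|g(v)-g(v')\|^2\le(3+c_1^2)\|\delta v\|^2$, i.e.\ $\|g(v)-g(v')\|\le\tfrac{\sqrt{3+c_1^2}}{2}\|v-v'\|$, which is the claim (and since $c_1<1$ this modulus is $<1$, so $g$ is a contraction).

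The only genuinely subtle point — the main obstacle — is resisting the crude bound $\|\delta r\|\le\|\delta v\|$, which would merely recover nonexpansiveness ($\kappa=1$): the improvement to $\sqrt{3+c_1^2}/2$ comes from expanding the square, retaining the swap structure of $S$ in the cross term, and controlling that term by the two blockwise Lipschitz constants, after which the final bound is a discriminant check. A secondary, routine care point is the spectral-norm identity $\|A\|=c_1$, where one must split on the signs of $1-\beta\sigma_0$ and $1-\beta\sigma_1$ to match the maximum of $|\tfrac{1-\beta\mu}{1+\beta\mu}|$ over $[\sigma_0,\sigma_1]$ with the stated expression for $c_1$.
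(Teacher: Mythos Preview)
Your proof is correct and follows essentially the same route as the paper: both decompose $g$ blockwise, use that $R_2=2\mathcal P_{[0,\infty)^q}-I$ is nonexpansive and $R_1$ is $c_1$-Lipschitz, arrive at the identical quadratic form $(1+c_1^2)a^2+2b^2+2(1+c_1)ab$ in $a=\|\delta v_1\|$, $b=\|\delta v_2\|$, and bound it by $(3+c_1^2)(a^2+b^2)$. The only cosmetic differences are that the paper obtains the constant $c_1$ by citing a result of Giselsson on reflected resolvents of strongly convex smooth functions rather than by your direct spectral computation of $A=2(2\beta H^TH+I)^{-1}-I$, and it closes the quadratic-form bound via Young's inequality with weight $c_1^2+1$ rather than your discriminant check; these are equivalent packagings of the same estimate.
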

\begin{proof}
We can explicitly calculate $g$ as follows 
\begin{equation} 
	g(v) = \frac{1}{2}((2\prox_{\beta\varphi}-I)(2\prox_{\beta \psi}-I)+I)v = \frac12 \begin{pmatrix}\mathcal R^2_\beta(v_2)+v_1 \\ \mathcal R^1_\beta(v_1)+v_2\end{pmatrix},
	\label{eq:dr-nn}  
\end{equation}
where $\mathcal R^i_\beta=2\prox_{\beta \psi_i}-I$, $i=1,2$. By Proposition~4.2 of~\citeappx{BauCom11}, the reflected operators $\mathcal R_\beta^1$ and $\mathcal{R}_\beta^2$ are nonexpansive. Moreover, since $\sigma_0>0$, $\psi_1$ is strongly convex with modulus $\sigma_0$ and $\sigma_1$-smooth. Then by Theorem~1 of~\citeappx{giselsson2016linear}, $\mathcal{R}_\beta^1$ is Lipschitz continuous with modulus $c_1$.
Next, for any $v,\bar{v}\in \R^{2q}$, we have
\begingroup
\allowdisplaybreaks
\begin{align*}
\notag \|g(v)-g(\bar v)\| & \\ \notag & \hspace{-12ex} = \frac12 \left[ \|\mathcal R_\beta^2(v_2)-\mathcal R_\beta^2(\bar v_2)  +v_1-\bar v_1\|^2 + \|\mathcal R_\beta^1(v_1)-\mathcal R_\beta^1(\bar v_1)  +v_2-\bar v_2\|^2 \right]^\frac12 \\
\notag & \hspace{-12ex} \leq \frac12 \left[ (c_1^2 + 1)\|v_1 - \bar v_1\|^2 + 2(c_1 + 1)\|v_1 - \bar v_1\|  \|v_2-\bar v_2\|  + 2 \|v_2 - \bar v_2\|^2 \right]^\frac12 \\
& \hspace{-12ex} \leq  \frac12 \left[ (c_1^2 + 1)\|v_1 - \bar v_1\|^2 +\frac{(c_1+1)^2}{c_1^2+1}\|v_1 - \bar v_1\|^2+(c_1^2+1)\|v_2-\bar v_2\|^2 + 2 \|v_2 - \bar v_2\|^2 \right]^\frac12 \\
\notag & \hspace{-12ex} \leq  \frac12 \sqrt{(c_1^2 + 3)\|v_1 - \bar v_1\|^2 + (c_1^2+3) \|v_2 - \bar v_2\|^2} \leq \frac{\sqrt{3+c_1^2}}{2}\|v-\bar{v}\|,
\end{align*}
\endgroup
where we used Cauchy's inequality, the nonexpansiveness of $\mathcal R^2_\beta$, and the Lipschitz continuity of $\mathcal R^1_\beta$. The estimate in the second to last line follows from Young's inequality. \qed
\end{proof}
Hence, assumption~\ref{Bone} is satisfied if $H$ has full column rank. 
Using the special form of the mapping $g$ in \eqref{eq:dr-nn}, we see that $g$ is twice continuously differentiable at $v = (v_1^T,v_2^T)^T$ if and only if $v \in \mathcal V := \R^q \times \prod_{i=1}^q \R \backslash \{0\}$, i.e., if none of the components of $v_2$ are zero. As before, we can then infer that assumption~\ref{Btwo} and the stronger condition \ref{Cone} have to hold at every fixed-point $v^*$ of $g$ satisfying $v^* \in \mathcal V$. 
\subsection{Further Extensions}
\label{sec:app-extension}
We now formulate a possible extension of the conditions presented in Section~\ref{sec:app-smo} to the nonsmooth setting. 

If the mapping $g$ has more structure and is connected to an underlying optimization problem like in forward-backward and Douglas-Rachford splitting, nonsmoothness of $g$ typically results from the proximity operator or projection operators. In such a case, further theoretical tools are available and for certain function classes it is possible to fully characterize the differentiability of $g$ at $x^*$ via a so-called strict complementarity condition. In fact, the conditions $w^* \in \mathcal W$ and $v^* \in \mathcal V$ from section~\ref{sec:app-tv} and \ref{appx:NNLS} are equivalent to such a strict complementarity condition. In the case of forward-backward splitting, a related and in-depth discussion of this important observation is provided in \citeappx{SteThePat17,mai2019anderson} and we refer the interested reader to~\citeappx{PolRoc96,milzarek2016numerical,LiaFadPey17,SteThePat17} for further background.

Concerning the stronger assumption \ref{Cone}, we can establish the following characterization: Suppose that $g$ is locally Lipschitz continuous and let us consider the properties:
\begin{itemize}
\item The function $g$ is differentiable at $x^*$. 
\item The mapping $g$ is strongly (or 1-order) semismooth at $x^*$~\citeappx{QiSun93}, i.e., we have 
\[ \sup_{M \in \partial g(x)}~\|g(x) - g(x^*) - M(x-x^*) \| = O(\|x-x^*\|^2) \]
when $x \to x^*$. Here, the multifunction $\partial g : \Rn \rightrightarrows \R^{n \times n}$ denotes Clarke's subdifferential of the locally Lipschitz continuous (and possibly nonsmooth) function $g$, see, e.g., \citeappx{Cla90,QiSun93}. 
\item There exists an outer Lipschitz continuous selection $M^* : \R^n \to \R^{n \times n}$ of $\partial g$ in a neighborhood of $x^*$, i.e., for all $x$ sufficiently close to $x^*$ we have $M^*(x) \in \partial g(x)$ and 
\[\quad \|M^*(x) - M^*(x^*)\| \leq L_M \|x - x^*\| \]
for some constant $L_M > 0$.
\end{itemize}
Then, the mapping $g$ satisfies the condition \ref{Cone} at $x^*$. 

\begin{proof} The combination of differentiability and semismoothness implies that $g$ is strictly differentiable at $x^*$ and as a consequence, Clarke's subdifferential at $x^*$ reduces to the singleton $\partial g(x^*) = \{g^\prime(x^*)\}$. We refer to~\citeappx{QiSun93,rockafellar2009variational,milzarek2016numerical} for further details. Thus, we can infer $M^*(x^*) = g^\prime(x^*)$ and we obtain
\begin{align*} 
	\|g(x) - g(x^*) - g^\prime(x^*)(x-x^*)\| & \leq \|g(x)-g(x^*)-M^*(x)(x-x^*)\| \\
	& \hspace{4ex} + \|[M^*(x)-M^*(x^*)](x-x^*)\|\\ 
	& \leq O(\|x-x^*\|^2) + L_M \|x-x^*\|^2, 
\end{align*}
for $x \to x^*$, where we used the strong semismoothness and outer Lipschitz continuity in the last step. This establishes \ref{Cone}. \qed \end{proof}

The class of strongly semismooth functions is rather rich and includes, e.g., piecewise twice continuously differentiable (PC${}^2$) functions~\citeappx{Ulb11}, eigenvalue and singular value functions~\citeappx{SunSun02,SunSun05}, and certain spectral operators of matrices~\citeappx{DinSunSunToh20}. Let us further note that the stated selection property is always satisfied when $g$ is a piecewise linear mapping. In this case, the sets $\partial g(x)$ are polyhedral and outer Lipschitz continuity follows from Theorem~3D.1 of~\citeappx{DonRoc14}.

\section{Ablation Study on Parameter Choices}
\label{app:ablation}
This subsection provides more numerical experiments on the parameters of our method, using the examples given in Figs.~\ref{fig:LogisticRegression},  \ref{fig:TVDenoising} and \ref{fig:NNLS} of the paper. In each experiment, we run our method by varying a subset of the parameters while keeping all other parameters the same as in the original figures, to evaluate how the varied parameters influence the performance of our method. 
For the evaluation, we plot the same convergence graphs as in the original figures to compare the performance resulting from different parameter choices.
The evaluation is performed on the parameters $c$, $(p_1, p_2)$, $(\eta_1, \eta_2)$, and $\mu_0$.

\begin{figure}[t!]
	\centering
	\includegraphics[width=0.9\columnwidth]{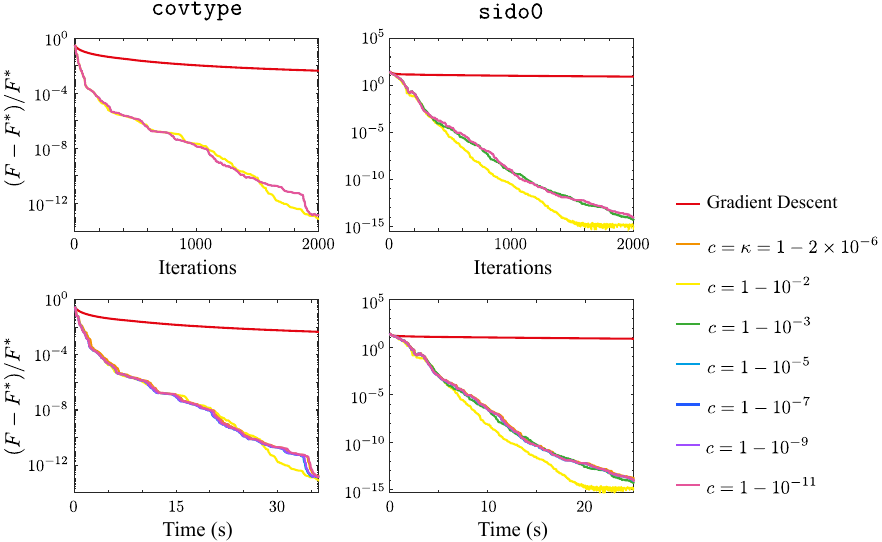}
	\caption{Convergence plots for LM-AA ($m = 15$) on the gradient descent solvers in Fig.~\ref{fig:LogisticRegression} with $\tau = L_F \times 10^{-6}$, using different values of the parameter $c$. }
	\label{fig:Ablation_LR_c}
\end{figure}

\begin{figure}[t!]
	\centering
	\includegraphics[width=0.9\columnwidth]{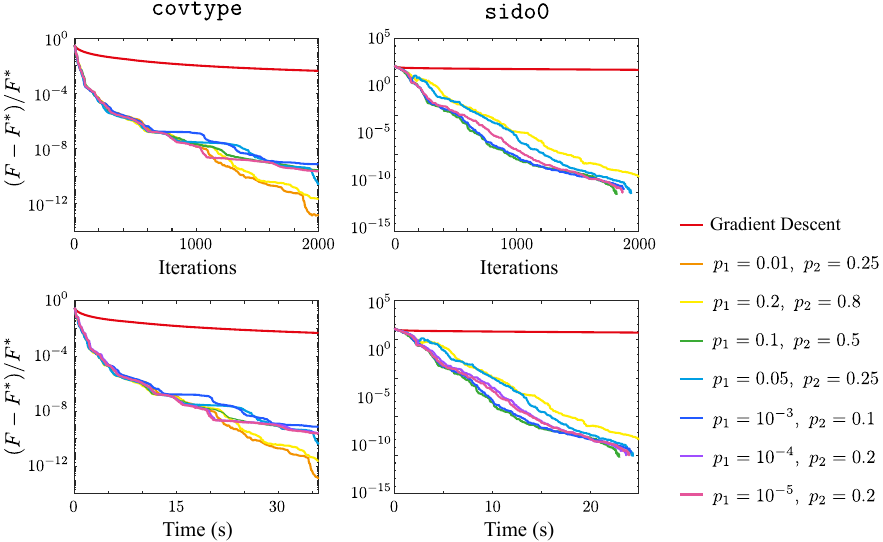}
	\caption{Convergence plots for LM-AA ($m = 15$) on the gradient descent solvers in Fig.~\ref{fig:LogisticRegression} with $\tau = L_F \times 10^{-6}$, using different values of the parameters $p_1$ and $p_2$. }
	\label{fig:Ablation_LR_p}
\end{figure}

We first consider the logistic regression problem in Fig.~\ref{fig:LogisticRegression} with $m=15$ and $\tau = L_F \times 10^{-6}$. The parameters used in Fig.~\ref{fig:LogisticRegression} are: $p_1=0.01$, $p_2=0.25$, $\eta_1=2$, $\eta_2=0.25$, $\mu_0 = 100$, $c = \kappa = ({L_F-\tau})/(L_F+\tau)$. Figs.~\ref{fig:Ablation_LR_c}, \ref{fig:Ablation_LR_p}, \ref{fig:Ablation_LR_eta}, and \ref{fig:Ablation_LR_mu} show the results using varied values of $c$, $(p_1, p_2)$, $(\eta_1, \eta_2)$, and $\mu_0$, respectively. 

\begin{figure}[t!]
	\centering
	\includegraphics[width=0.9\columnwidth]{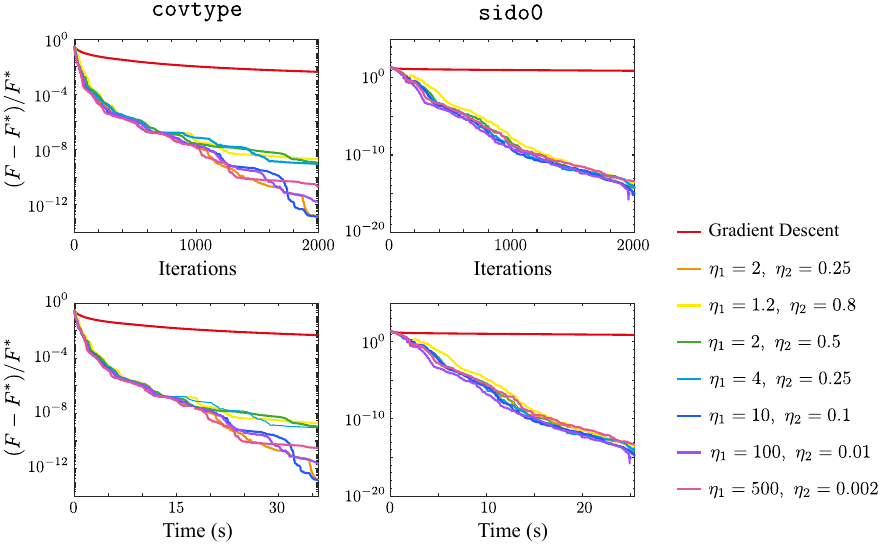}
	\caption{Convergence plots for LM-AA ($m = 15$) on the gradient descent solvers in Fig.~\ref{fig:LogisticRegression} with $\tau = L_F \times 10^{-6}$, using different values of the parameters $\eta_1$ and $\eta_2$. }
	\label{fig:Ablation_LR_eta}
\end{figure}

\begin{figure}[t!]
	\centering
	\includegraphics[width=0.85\columnwidth]{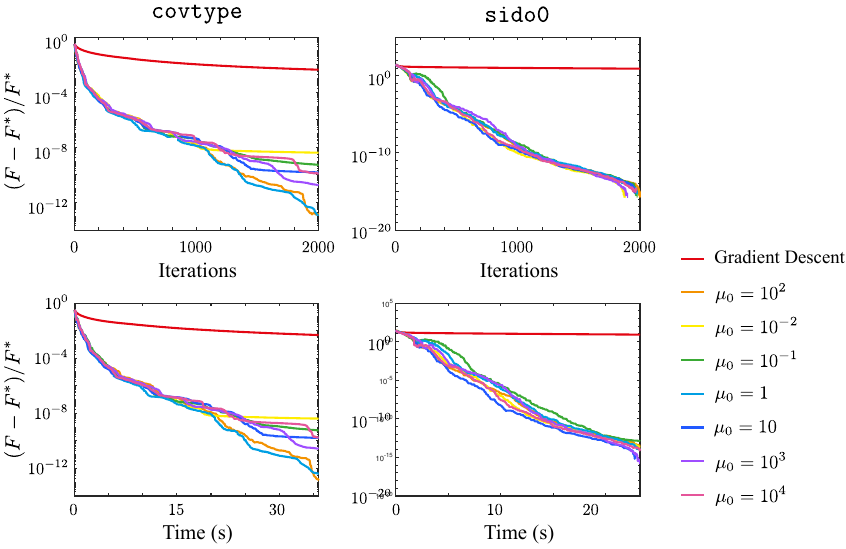}
	\caption{Convergence plots for LM-AA ($m = 15$) on the gradient descent solvers in Fig.~\ref{fig:LogisticRegression} with $\tau = L_F \times 10^{-6}$, using different values of the parameter $\mu_0$.}
	\label{fig:Ablation_LR_mu}
\end{figure}

Next, we consider the image reconstruction problem in Fig.~\ref{fig:TVDenoising} with $m = 5$ and $\beta = 100$. The parameters used in Fig.~\ref{fig:TVDenoising} are: $p_1=0.01$, $p_2=0.25$, $\eta_1=2$, $\eta_2=0.25$, $\mu_0 = 1$, and $c = \kappa$ where $\kappa$ is derived in Appendix~\ref{sec:app-tv}. Figs.~\ref{fig:Ablation_TV_c}, \ref{fig:Ablation_TV_p}, \ref{fig:Ablation_TV_eta}, and \ref{fig:Ablation_TV_mu} show the results using varied values of $c$, $(p_1, p_2)$, $(\eta_1, \eta_2)$, and $\mu_0$, respectively.

\begin{figure}[t!]
	\centering
	\includegraphics[width=0.9\columnwidth]{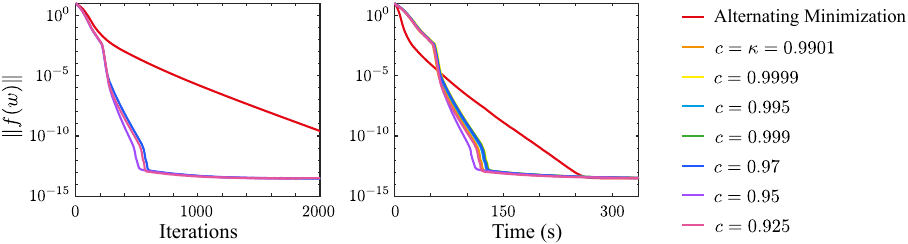}
	\caption{Convergence plots for LM-AA ($m = 5$) on the alternating minimization solver in Fig.~\ref{fig:TVDenoising} with $\beta = 100$, using different values of the parameter $c$.}
	\label{fig:Ablation_TV_c}
\end{figure}

\begin{figure}[t!]
	\centering
	\includegraphics[width=0.9\columnwidth]{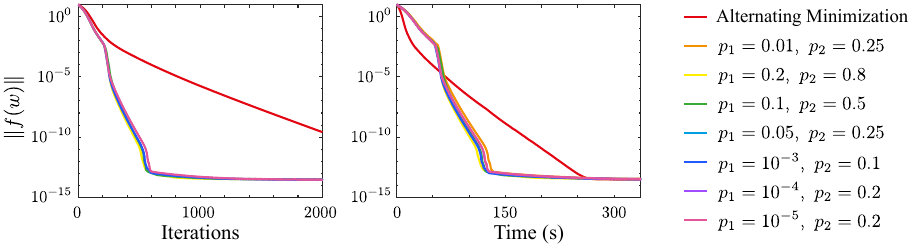}
	\caption{Convergence plots for LM-AA ($m = 5$) on the alternating minimization solver in Fig.~\ref{fig:TVDenoising} with $\beta = 100$, using different values of the parameters $p_1$ and $p_2$.}
	\label{fig:Ablation_TV_p}
\end{figure}

\begin{figure}[t!]
	\centering
	\includegraphics[width=0.9\columnwidth]{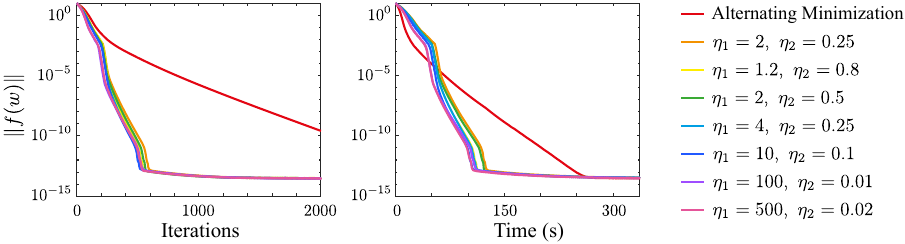}
	\caption{Convergence plots for LM-AA ($m = 5$) on the alternating minimization solver in Fig.~\ref{fig:TVDenoising} with $\beta = 100$, using different values of the parameters $\eta_1$ and $\eta_2$.}
	\label{fig:Ablation_TV_eta}
\end{figure}

\begin{figure}[t!]
	\centering
	\includegraphics[width=0.9\columnwidth]{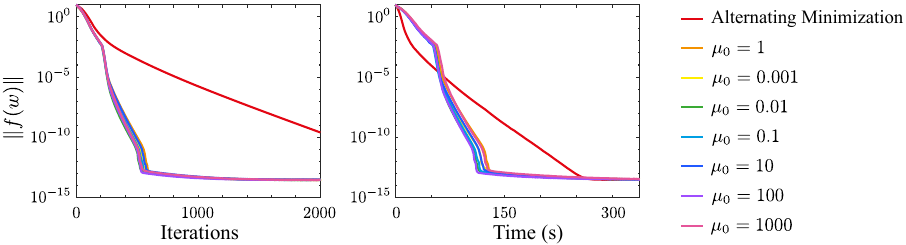}
	\caption{Convergence plots for LM-AA ($m = 5$) on the alternating minimization solver in Fig.~\ref{fig:TVDenoising} with $\beta = 100$, using different values of the parameter $\mu_0$.}
	\label{fig:Ablation_TV_mu}
\end{figure}

Finally, we consider the nonnegative least squares problem in Fig.~\ref{fig:NNLS} with $m=10$. The parameters used in Fig.~\ref{fig:NNLS} are: $p_1=0.01$, $p_2=0.25$, $\eta_1=2$, $\eta_2=0.25$, $\mu_0 = 1$, and $c = \kappa$ where $\kappa$ is derived in Appendix~\ref{appx:NNLS}. Figs.~\ref{fig:Ablation_NNLS_c}, \ref{fig:Ablation_NNLS_p}, \ref{fig:Ablation_NNLS_eta}, and \ref{fig:Ablation_NNLS_mu} show the results using varied values of $c$, $(p_1, p_2)$, $(\eta_1, \eta_2)$, and $\mu_0$, respectively.

\begin{figure}[t!]
	\centering
	\includegraphics[width=0.93\columnwidth]{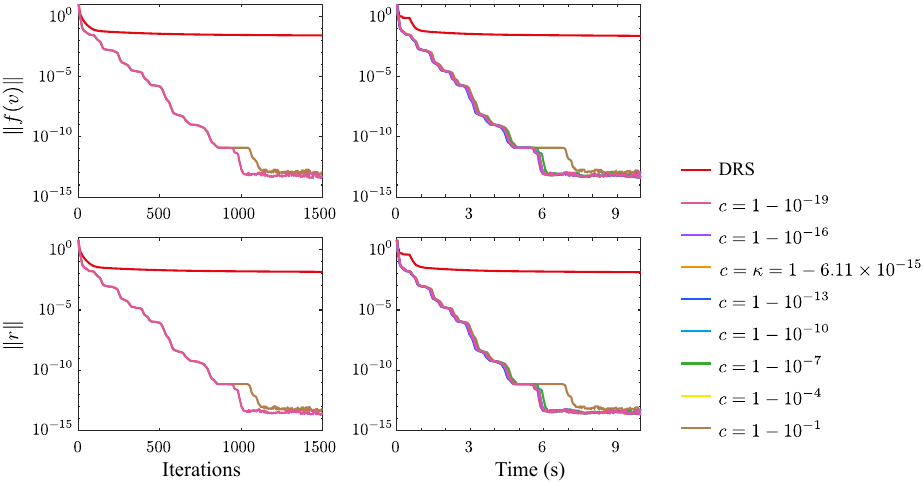}
	\caption{Convergence plots for LM-AA ($m = 10$) on the DR splitting solver in Fig.~\ref{fig:NNLS}, using different values of the parameter $c$.}
	\label{fig:Ablation_NNLS_c}
\end{figure}

\begin{figure}[t!]
	\centering
	\includegraphics[width=0.9\columnwidth]{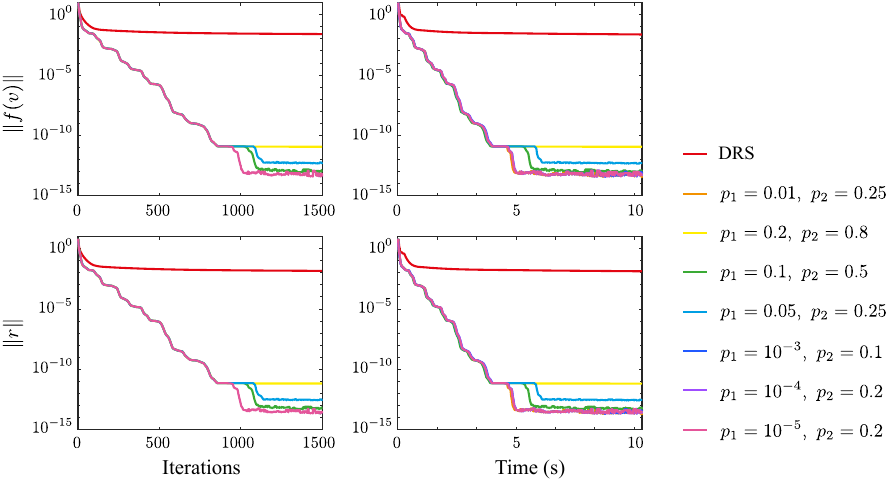}
	\caption{Convergence plots for LM-AA ($m = 10$) on the DR splitting solver in Fig.~\ref{fig:NNLS}, using different values of the parameters $p_1$ and $p_2$.}
	\label{fig:Ablation_NNLS_p}
\end{figure}

\begin{figure}[t!]
	\centering
	\includegraphics[width=0.9\columnwidth]{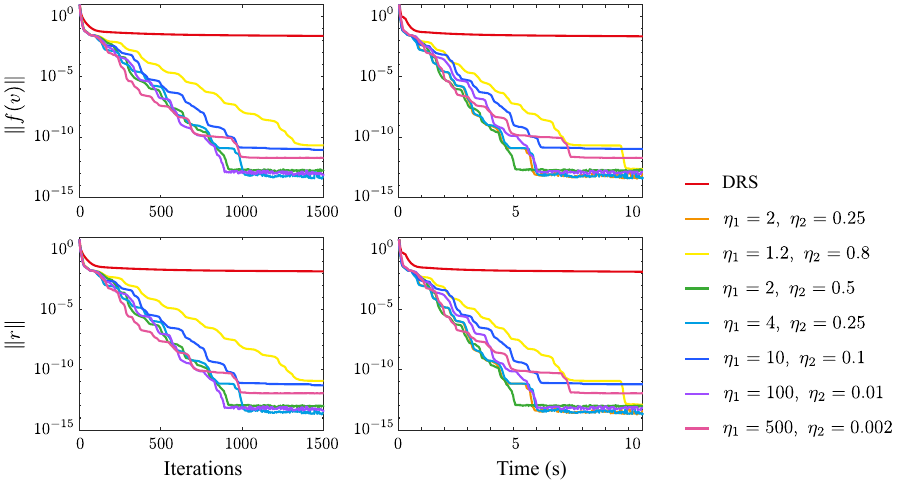}
	\caption{Convergence plots for LM-AA ($m = 10$) on the DR splitting solver in Fig.~\ref{fig:NNLS}, using different values of the parameters $\eta_1$ and $\eta_2$.}
	\label{fig:Ablation_NNLS_eta}
\end{figure}

\begin{figure}[t!]
	\centering
	\includegraphics[width=0.81\columnwidth]{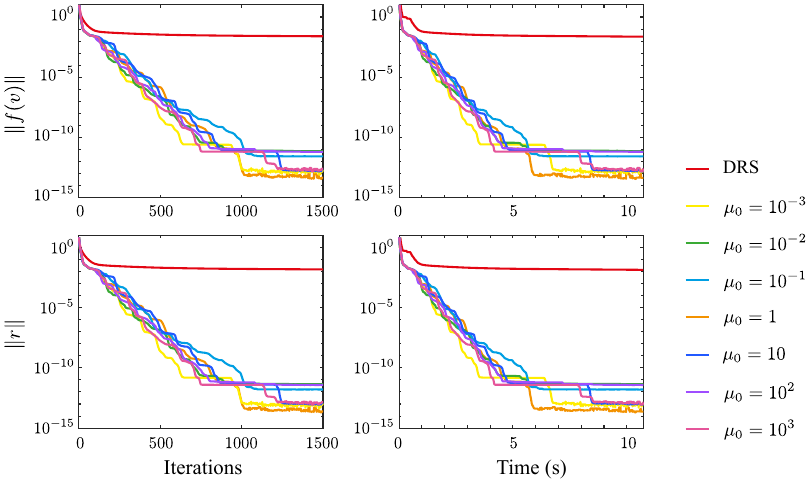}
	\caption{Convergence plots for LM-AA ($m = 10$) on the DR splitting solver in Fig.~\ref{fig:NNLS}, using different values of the parameter $\mu_0$.}
	\label{fig:Ablation_NNLS_mu}
\end{figure}

As shown in Figs.~\ref{fig:Ablation_LR_c}, \ref{fig:Ablation_TV_c}, and \ref{fig:Ablation_NNLS_c}, our algorithm is not very sensitive w.r.t. the choice of $c$. Specifically, we still observe convergence if $c$ is chosen smaller than the Lipschitz constant $\kappa$. This robustness is particularly important when good estimates of the constant $\kappa$ are not available. Although the bound $c \geq \kappa$ is required in our theoretical results in Section~\ref{sec:loc}, fast convergence can still be observed for other choices of $c$. This indicates that either we have over-estimated the Lipschitz constant $\kappa$ or the acceleration effect of $\AA$ steps can be much faster than $\kappa$. 

The results in Figs.~\ref{fig:Ablation_LR_p}, \ref{fig:Ablation_TV_p}, and \ref{fig:Ablation_NNLS_p} demonstrate that the performance of LM-AA is not overly affected by the choice of the trust-region parameters $p_1$ and $p_2$ either. In general, good performance can be achieved if $p_1$ is moderately small and $p_2$ is not too large. Thus, we decide to work with the standard choice $p_1 = 0.01$ and $p_2 = 0.25$. 

In comparison, the trust-region parameters $\eta_1$ and $\eta_2$ can have a more significant impact on the performance of our algorithm. While the performance of LM-AA is not sensitive to the choice of $\eta_1$ and $\eta_2$ in the logistic regression problem using the dataset \texttt{sido0} (Fig.~\ref{fig:Ablation_LR_eta}) and in the denoising problem (Fig.~\ref{fig:Ablation_TV_eta}), more variation can be seen in the remaining two examples. In general, the performance seems to deteriorate when $\eta_1$ and $\eta_2$ are chosen to be very close to each other. The standard choice $\eta_1 = 2$ and $\eta_2 = 0.25$ again achieves convincing performance on all numerical examples and has a good balance when increasing and decreasing the weight parameter $\lambda_k$. 

The convergence plot for different values of $\mu_0$ are shown in Figs.~\ref{fig:Ablation_LR_mu}, \ref{fig:Ablation_TV_mu}, and \ref{fig:Ablation_NNLS_mu}. Our observations are again somewhat similar: the performance of LM-AA on the logistic regression problem for \texttt{sido0} (Fig.~\ref{fig:Ablation_LR_mu}) and on the denoising problem  (Fig.~\ref{fig:Ablation_TV_mu}) is very robust w.r.t. the choice of $\mu_0$. In the nonnegative least squares problem, $\mu_0$ appears to mainly affect the last convergence stage of the algorithm, i.e., different choices of $\mu_0$ can lead to an earlier jump to a level with higher accuracy. Overall the parameters $\mu_0 =1$ (for denoising and NNLS) and $\mu_0 = 100$ (for logistic regression) yield the most robust results.

\section{Ablation Study on Permutation Strategy}
\label{appx:AblationPermutation}
{We also provide an ablation study for the permutation strategy in line 6 of Algorithm~\ref{algo1}. In general, when the parameter $\gamma$ is small, our nonmonotone globalization strategy is close to a monotone criterion on the residual. In this case, the minimal residual iteration $k_0$ mostly coincides with the current iteration number $k$ and the permutation causes little difference. However, when $\gamma$ is (relatively) large, then the usage of permutations can cause essential differences in the numerical performance. In particular, when the current trial step is rejected, then Algorithm~\ref{algo1} performs $x^{k+1} = g^{k}$ as next iteration if no permutation is used. In general, the update $x^{k+1} = g^{k}$ can be worse than $x^{k+1} =g^{k_0}$ since $x^{k_0}$ has the smallest residual among the $m$ latest iterations. We test Algorithm~\ref{algo1} without permutation in Figure~\ref{fig:permutation} for the logistic regression experiment. We set $\gamma=0.05$ in Figure~\ref{fig:permutation} for LM-AA and keep other parameters unchanged. As can be seen from the figure, permutation improves the overall convergence and performance.} 

\begin{figure} [t!]
	\centering
	\includegraphics[width=\columnwidth]{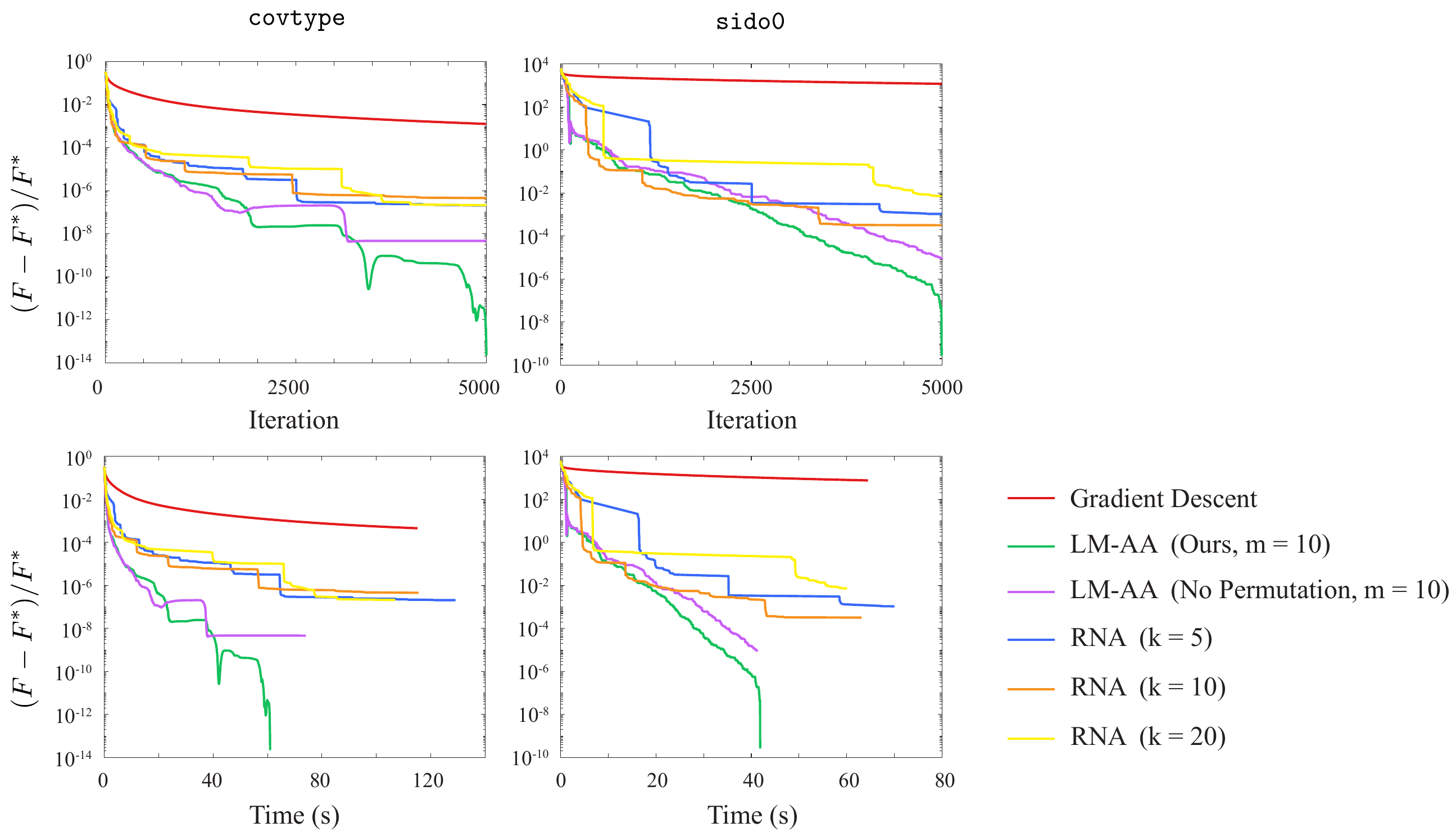}
	\caption{{The performance of LM-AA ($m=10, \gamma=0.05$) with or without the permutation strategy given in Eq.~\ref{eq:PermutationRule}, for logistic regression on the \texttt{covtype} and \texttt{sido0} datasets.}}
	\label{fig:permutation} 
\end{figure}

\end{document}